\documentclass[smallextended,envcountsame]{svjour3}%

\usepackage{amsfonts}
\usepackage{geometry}
\usepackage{amsmath}
\usepackage{amssymb}
\usepackage{graphicx}
\smartqed

\begin{document}

\title{Discrete Cycloids from Convex Symmetric Polygons}

\author{Marcos Craizer \and Ralph Teixeira \and Vitor Balestro}

\institute{M. Craizer
              \at PUC-Rio, Brazil \\\email{craizer@mat.puc-rio.br}
           \and           
           R. Teixeira
              \at UFF, Brazil \\\email{ralph@mat.uff.br}
           \and
           V. Balestro
              \at UFF and CEFET, Brazil \\\email{vitorbalestro@id.uff.br}
}

\maketitle

\begin{abstract}
Cycloids, hipocycloids and epicycloids have an often forgotten common
property: they are homothetic to their evolutes. But what if use convex
symmetric polygons as unit balls, can we define evolutes and cycloids which
are genuinely discrete? Indeed, we can! We define \emph{discrete cycloids} as
eigenvectors of a discrete \emph{double evolute transform} which can be seen
as a linear operator on a vector space we call \emph{curvature radius space}.
We are also able to classify such cycloids according to the eigenvalues of
that transform, and show that the number of \emph{cusps} of each cycloid is
well determined by the ordering of those eigenvalues. As an elegant
application, we easily establish a version of the four-vertex theorem for
closed convex polygons. The whole theory is developed using only linear
algebra, and concrete examples are given.

\keywords{Cycloids \and Discrete Evolute \and Four-Vertex Theorem \and Minkowski Geometry}

\subclass{52C05 \and 39A06 \and 39A14 \and 39A23}

\end{abstract}

\section{Introduction}

Euclidean cycloids (and hypocycloids and epicycloids) can be characterized as
planar curves which are $\lambda-$homothetic to their evolutes (which kind of
cycloid depends on the signal of $\lambda-1$). Such idea can na\-tu\-rally be
extended to normed planes (see \cite{Cycloids}) if the unit ball is
sufficiently smooth. But what if the ball is a polygon? Can we define
genuinely discrete analogues of cycloids without using limiting processes?

In this paper, we present a discrete evolute transform  and then define discrete cycloids as polygonal lines which
are homothetic to their double evolutes.  Our evolute construction is an alternative to \cite{Arnold} -- we follow instead the
approach in \cite{Evolutes}. By using a suitable representation,
we can represent our polygonal lines as vectors in a space we call $L_{P}$. In
this space, the double evolute transform is linear, so we rephrase the problem
in two different ways: as an eigenvector problem, or as a recurrence. By
analysing the interaction of this transform within many subspaces of $L_{P}$,
we are able to establish the spectral representation of the double evolute
transform. As a consequence, we are able to decompose any element of $L_{P}$
as a sum of cycloids, producing a generalization of the Discrete Fourier
Transform. As an application of this decomposition, we provide an elegant
proof of a four vertex theorem for polygons.

We provide explicit cycloid formulae when the polygon is regular; as expected,
if the polygon is close to the usual euclidean unit ball, the corresponding
cycloids are approximations of the classical cycloids.

More specifically, we start with a symmetric polygon $P=P_{1}P_{2}...P_{2n}$
which will be our unit ball on the plane, and define its dual ball
$Q=Q_{1}Q_{2}...Q_{2n}$. A polygonal line $M$ whose sides are respectively
parallel to the sides of $P$ can then be represented by the lengths
$r_{1},r_{2},...$ of its sides (taking the sides of $P$ as unit length in each
direction) -- we call this the \emph{curvature radius representation} of $M$.
We show that $M$ is a cycloid exactly when its radii satisfy a difference
equation of the kind%
\begin{equation}
\frac{-1}{\left[  P_{i},P_{i+1}\right]  }\nabla_{i}\left(  \frac{\Delta_{i}%
r}{\left[  Q_{i},Q_{i+1}\right]  }\right)  =\lambda r_{i} \label{DiffEq}%
\end{equation}
where $\Delta_{i}$ and $\nabla_{i}$ are forward and backward difference
operators, and $\left[  \cdot,\cdot\right]  $ is the determinant of two
vectors. This equation is the natural discretization of the 2nd order
Sturm-Liouville type Differential Equation displayed in \cite{Cycloids}, so we
can reasonably expect it to have a nice spectral structure (as seen in
\cite{WangShi}). Indeed, if we require the list $r_{i}$ to be $2n$-periodic,
we are able to show that the eigenvalues associated to Eq. (\ref{DiffEq}) can
be ordered as%
\[
\lambda_{0}(=0)<\lambda_{1}^{1}=\lambda_{1}^{2}(=1)<\lambda_{2}^{1}\leq
\lambda_{2}^{2}<\lambda_{3}^{1}\leq\lambda_{3}^{2}<...\lambda_{n-1}^{1}%
\leq\lambda_{n-1}^{2}<\lambda_{n}%
\]
where each eigenvector is a closed cycloid -- except for those with eigenvalue
$1$, which form a $2$-dimensional space of non-closed cycloids (there are no
non-trivial hypocycloids). Moreover, we show that the cycloid associated to
$\lambda_{k}^{i}$ has exactly $2k$ ordinary cusps. When $k$ is even, the
cycloid is a symmetric polygon; when $k$ is odd ($\neq1$), the cycloid is a
polygon of $0$-width. It is interesting to note that, in the Euclidean case,
the eigenvalue $\lambda$ is \textbf{determined} by the number of cusps of the
cycloids -- not the case here, since we might have $\lambda_{k}^{1}\neq
\lambda_{k}^{2}$!

Figure \ref{FigExample} shows a concrete example, where the unit ball is the
octagon in the top image (and the cycloid for $\lambda_{0}=0$). The next row
pictures two different open cycloids ($\lambda=1$). The third row shows
symmetric cycloids ($\lambda_{2}^{1}\approx3.21$ and $\lambda_{2}^{2}%
\approx3.46$), each with $4$ cusps. The fourth row shows $0$-width cycloids
($\lambda_{3}^{1}\approx4.90$ and $\lambda_{3}^{2}\approx7.92$), with $3$
"double" cusps each. Finally, we have one last cycloid ($\lambda_{4}%
\approx8.15$) where all $8$ vertices are cusps.

\begin{center}
\begin{figure}[h] \centering
{\includegraphics[height=1.2 in,width=2.4 in]{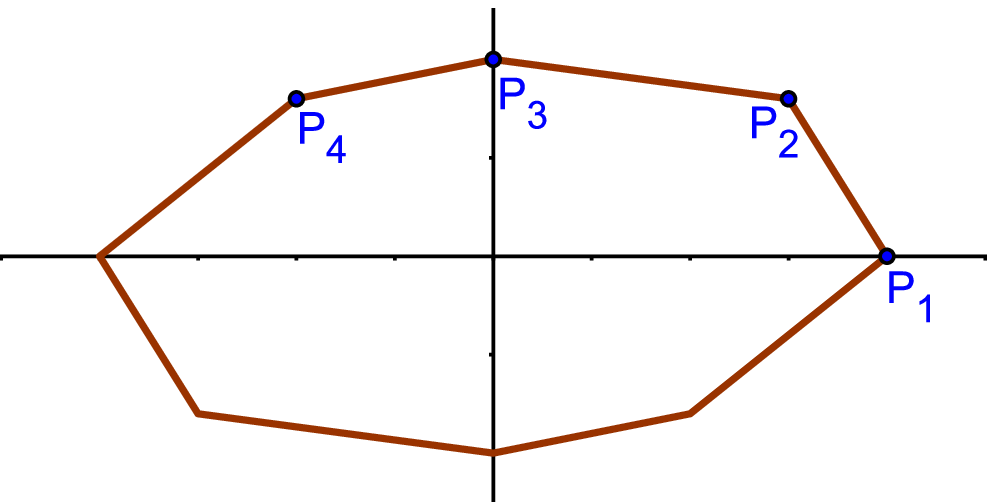}}

{\includegraphics[height=1.2 in,width=2.4 in]{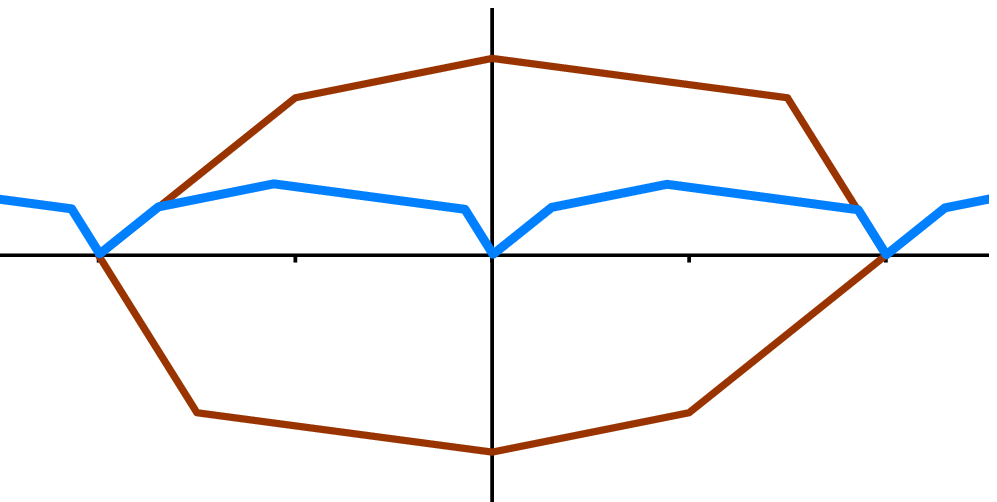}}
{\includegraphics[height=1.2 in,width=2.4 in]{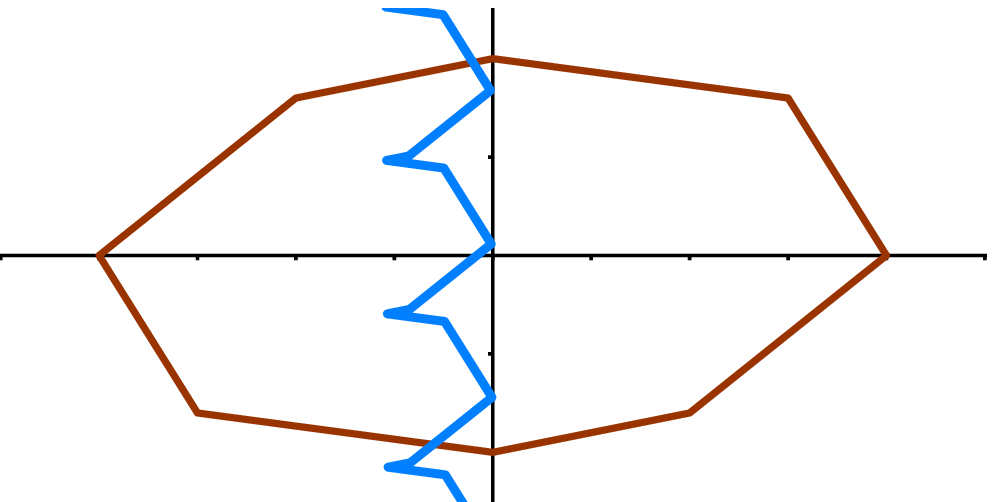}}

{\includegraphics[height=1.2 in,width=2.4 in]{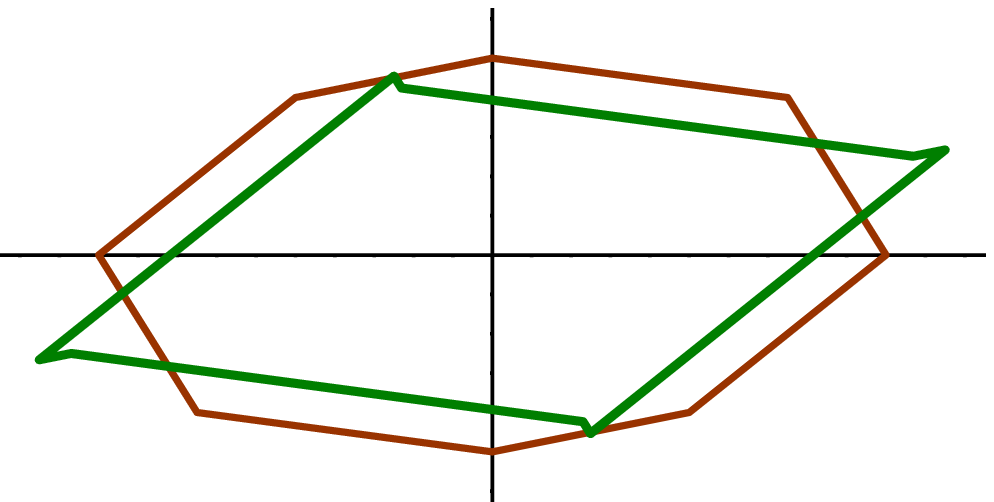}}
{\includegraphics[height=1.2 in,width=2.4 in]{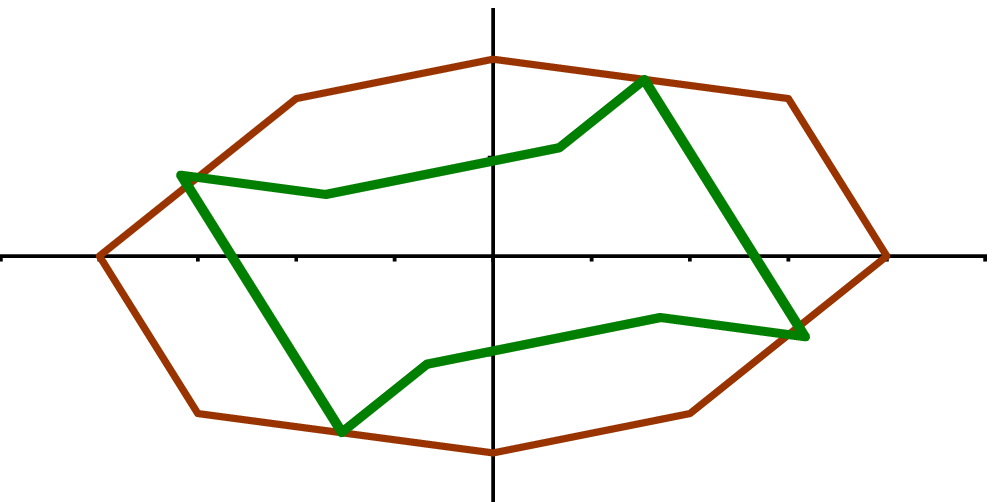}}

{\includegraphics[height=1.2 in,width=2.4 in]{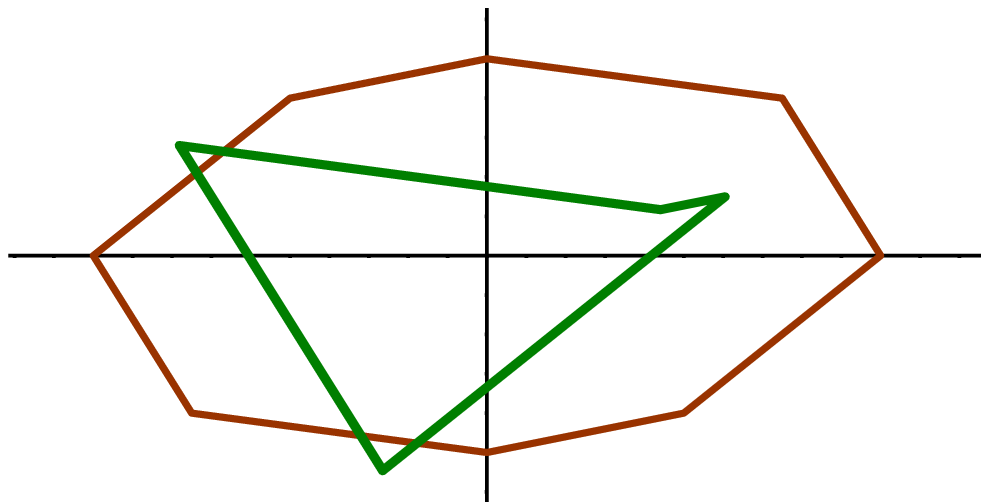}}
{\includegraphics[height=1.2 in,width=2.4 in]{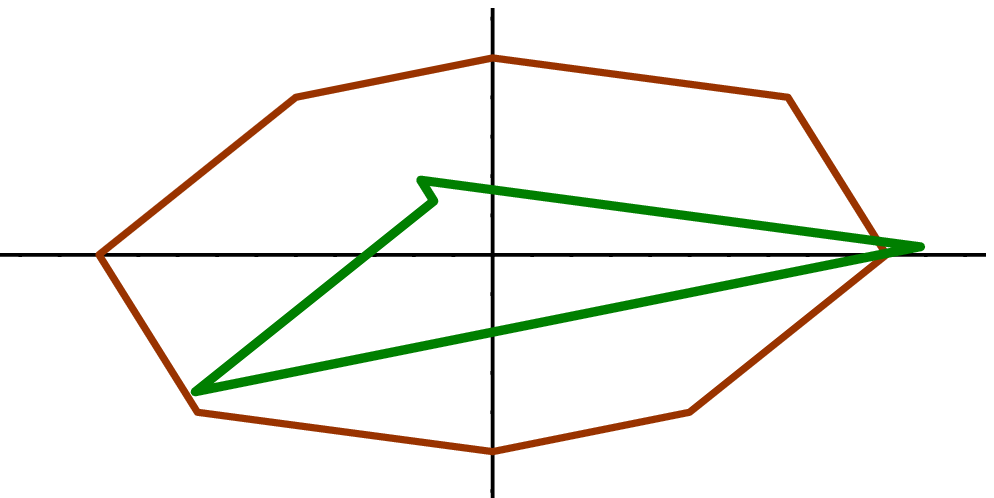}}

{\includegraphics[height=1.2 in,width=2.4 in]{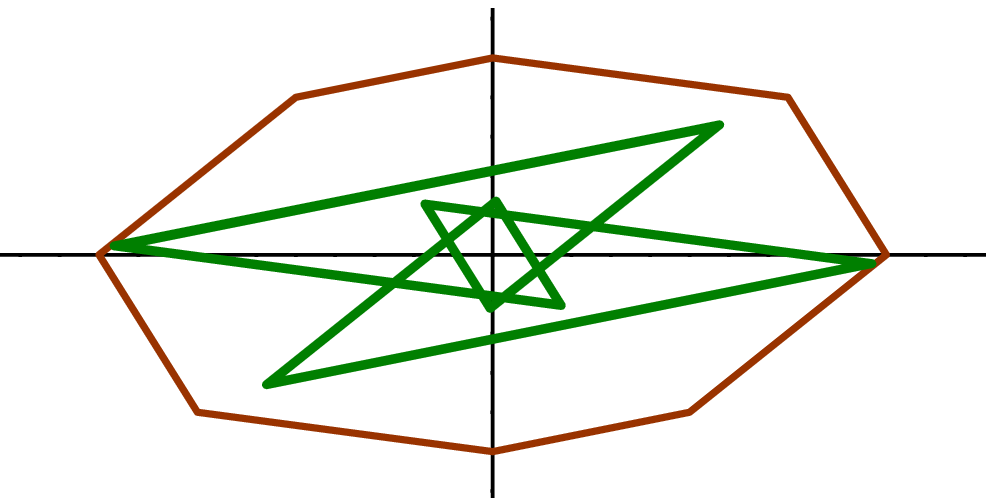}}
\caption{Cycloids related to an octagon}
\label{FigExample}
\end{figure}
\end{center}

If we require instead the list $r_{i}$ to be just $2mn$-periodic, the old
eigenvalues are joined by new ones in a very orderly fashion%
\[%
\setlength{\tabcolsep}{0pt}
\begin{tabular}
{llllll}
&  &  &  & $\lambda_{0}\left(  =0\right)  $ & $<$\\
$<\lambda_{1/m}^{1}=\lambda_{1/m}^{2}$ & $<\lambda_{2/m}^{1}=\lambda_{2/m}%
^{2}$ & $<...$ & $<\lambda_{1-1/m}^{1}=\lambda_{1-1/m}^{2}<$ & $\lambda
_{1}^{1}=\lambda_{1}^{2}\left(  =1\right)  $ & $<$\\
$<\lambda_{1+1/m}^{1}=\lambda_{1+1/m}^{2}$ & $<\lambda_{1+2/m}^{1}%
=\lambda_{1+2/m}^{2}$ & $<...$ & $<\lambda_{2-1/m}^{1}=\lambda_{2-1/m}^{2}<$ &
$\lambda_{2}^{1}\leq\lambda_{2}^{2}$ & $<$\\
$<\lambda_{2+1/m}^{1}=\lambda_{2+1/m}^{2}$ & $<\lambda_{2+2/m}^{1}%
=\lambda_{2+2/m}^{2}$ & $<...$ & $<\lambda_{3-1/m}^{1}=\lambda_{3-1/m}^{2}<$ &
$\lambda_{3}^{1}\leq\lambda_{3}^{2}$ & $<$\\
$<...$ & $...$ & $...$ & $...$ & $...$ & $<$\\
$<\lambda_{n-1+1/m}^{1}=\lambda_{n-1+1/m}^{2}$ & $<\lambda_{n-1+2/m}%
^{1}=\lambda_{n-1+2/m}^{2}$ & $<...$ & $<\lambda_{n-1/m}^{1}=\lambda
_{n-1/m}^{2}< $ & $\lambda_{n}$ &
\end{tabular}
\
\]
(note the previous eigenvalue list in the last column -- they \textbf{do not}
necessarily come in identical pairs). In summary, we have $2m-2$ non-trivial
hypocycloids, $2$ cycloids and $2mn-2m-1$ epicycloids. The fractional indices
work across different periods -- for example, the eigenvalue $\lambda
_{2/5}^{1}$ which appears when looking for a list of period $10n$ is indeed
the same as $\lambda_{6/15}^{1}$ which would show up when looking for period
$30n$.

Many of the results we present have been previously established in a more
general context -- see \cite{WangShi}, for example -- but our approach is very
geometric and requires only basic Linear Algebra.

The organization of the paper is as follows: in section \ref{SecMink} we
establish basic facts about \emph{the polygonal unit ball and its dual}.
Section \ref{SecCurv} presents the \emph{curvature radius space} (with an
inner product) and establishes geometric interpretations of many of its
interesting subspaces. In section \ref{SecEvol} we present the discrete
\emph{evolute }and the\emph{\ double evolute }transforms. Section
\ref{SecCycl} finally defines the \emph{cycloids}, starting out with the
important example when the ball is a regular polygon, and proceeds to the
spectral analysis of the double evolute transform and the determination of the
number of cusps of each cycloid (in the case where the cycloids have the same
period as the unit ball). In section \ref{SecPeri}, we show that the general
case where our cycloids have \emph{other periods} than the unit ball can be
reduced to the $2n$-periodic polygons, establishing the more general
result. Finally, in section \ref{SecFourVertex} we use our framework to
quickly prove a four vertex theorem for polygons.

\subsection{Notation}

Given two points or vectors $v,w\in\mathbb{R}^{2}$, we denote the
\emph{determinant} whose columns are $v$ and $w$ by $\left[  v,w\right]  $.
Given a list of numbers (or vectors) $\left\{  L_{i}\right\}  _{i\in
\mathbb{Z}}$, we define the \emph{forward difference operator} $\Delta
L=\left\{  \Delta_{i}L\right\}  _{i\in\mathbb{Z}}$ where $\Delta_{i}%
L=L_{i+1}-L_{i}$ and the \emph{backward difference operator} $\nabla
L=\left\{  \nabla_{i}L\right\}  _{i\in\mathbb{Z}}$ where $\nabla_{i}%
=\Delta_{i-1}$ (whenever necessary, indices are taken mod $2n$); their
composition will be denoted $\delta^{2}$ (so $\delta^{2}L=L_{i+1}%
-2L_{i}+L_{i-1}$). Whenever two matrices $A$ and $B$ are similar (that is,
$A=M^{-1}BM$ for some invertible matrix $M$), we write $A\approx B$.

\section{The polygonal ball and its dual\label{SecMink}}

We start out with a plane symmetric non-degenerate convex polygon
$P=P_{1}P_{2}...P_{2n}$ (that is, $P_{i+n}=-P_{i}$), which will be our
reference \emph{ball of radius 1} -- we might as well imagine that the
vertices are in counter-clockwise order around the origin\footnote{Choosing a
convex symmetric unit ball is the same as choosing a norm in the plane, so we
are in the realms of Minkowski Geometry \cite{Thompson}.}. Its \emph{dual} is
the only polygon $Q=Q_{1}Q_{2}...Q_{2n}$ which satisfies%
\[
\left[  P_{i},Q_{i}\right]  =1\text{ and }\left[  \Delta_{i}P,Q_{i}\right]
=\left[  P_{i+1},\Delta_{i}Q\right]  =0
\]
for $i=1,...,2n$. Note that, in view of the first equation, the two last ones
are actually equivalent, since%
\[
\Delta_{i}\left[  P_{i},Q_{i}\right]  =\left[  \Delta_{i}P,Q_{i}\right]
+\left[  P_{i+1},\Delta_{i}Q\right]  =0\text{.}%
\]

\begin{figure}[h]
\centering
\includegraphics[height=1.5 in,width=3 in]{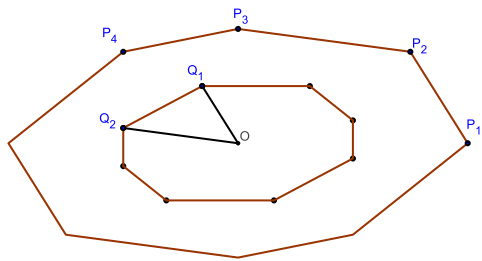}
\caption{A $P$-ball and its dual}
\label{FigBallandDual}
\end{figure}

Geometrically, each vector $Q_{i}$ is parallel to the corresponding side of
$P$, and vice-versa. These conditions actually allow us to write explicitly%
\begin{equation}
P_{i+1}=-\alpha_{i}\Delta_{i}Q\text{ and }Q_{i}=\beta_{i}\Delta_{i}P
\label{PQ}%
\end{equation}
where%
\begin{equation}
\alpha_{i}=\frac{1}{\left[  Q_{i},Q_{i+1}\right]  }\text{ and }\beta_{i}%
=\frac{1}{\left[  P_{i},P_{i+1}\right]  } \label{AlphaandBeta}%
\end{equation}
From there, it is easy to see that the dual ball is also symmetric. Moreover,
since%
\begin{align}
\left[  Q_{i-1},Q_{i}\right]   &  =\beta_{i-1}\beta_{i}\left[  \nabla
_{i}P,\Delta_{i}P\right]  >0\label{SignalQ}\\
\left[  \nabla_{i}Q,\Delta_{i}Q\right]   &  =\left[  \left[  Q_{i-1}%
,Q_{i}\right]  P_{i},\left[  Q_{i},Q_{i+1}\right]  P_{i+1}\right]
=\beta_{i-1}\beta_{i}\beta_{i+1}\left[  \nabla_{i}P,\Delta_{i}P\right]
\left[  \Delta_{i}P,\Delta_{i+1}P\right]  >0\nonumber
\end{align}
we see that $Q$ is also convex and ordered in a counter-clockwise orientation
(in particular, all $\alpha_{i}$ and $\beta_{i}$ are positive). Finally, we
note for future reference that any scaling of a factor $\gamma$ applied to $P$
implies in a scaling of factor $\frac{1}{\gamma}$ applied to $Q$.%

\section{The curvature radius space\label{SecCurv}}

Consider all polygonal lines $M=\left(  M_{i}\right)  _{i\in\mathbb{Z}}$ whose
sides are respectively parallel to the corresponding sides of $P$ (though they
do not necessarily close, let us call them $P$\emph{-polygons} anyway). More
explicitly, we will require%
\[
\Delta_{i}M=r_{i}\cdot\Delta_{i}P
\]
for some list of real numbers $\left(  r_{i}\right)  _{i\in\mathbb{Z}}$ (we
allow $r_{i}=0$ with no further ado). Each number $r_{i}$ will be called the
\emph{curvature radius} of the side $M_{i}M_{i+1}$ (with respect to $P$). Up
to a translation, all $P$-polygons are uniquely represented by the list
$\left(  r_{i}\right)  $.

Before continuing, we add another restriction -- we require the sides (but not
necessarily the vertices!) to repeat:

\begin{definition}
A polygonal line $M$ is a \emph{periodic }$P$\emph{-polygon} when%
\[
\Delta_{i+2n}M=\Delta_{i}M
\]
for all $i\in\mathbb{Z}$. Since in this case we clearly have%
\begin{equation}
r_{i+2n}=r_{i} \label{Period}%
\end{equation}
each such polygonal line $M$ (up to a translation) can be represented by its
\emph{radii vector} $\vec{r}=\left(  r_{i}\right)  _{i=1,2,...,2n}$. We write
$L_{P}$ for such space of all periodic $P$-polygons.
\end{definition}

\begin{figure}[h] \centering
{\includegraphics[
trim=0.195823in 0.000000in 0.075256in 0.011954in,
height=1.2 in,width=2.4 in] {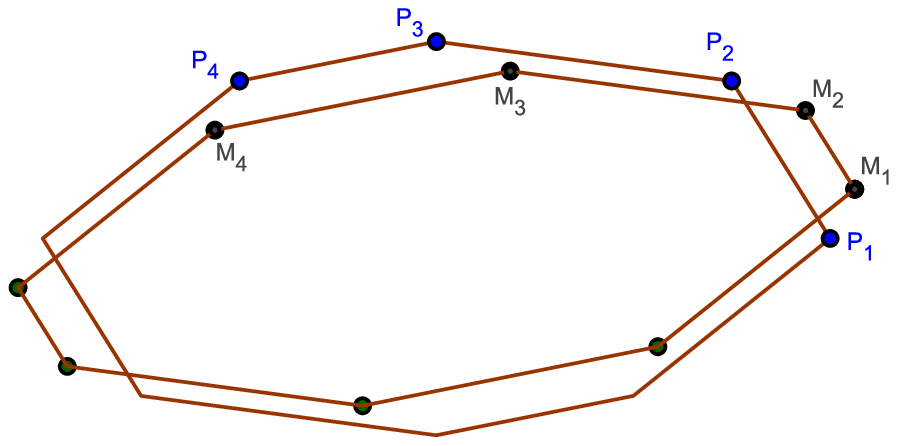}
\includegraphics[
trim=0.314505in 0.000000in 0.343497in 0.019234in,
height=1.2 in, width=2.4 in]{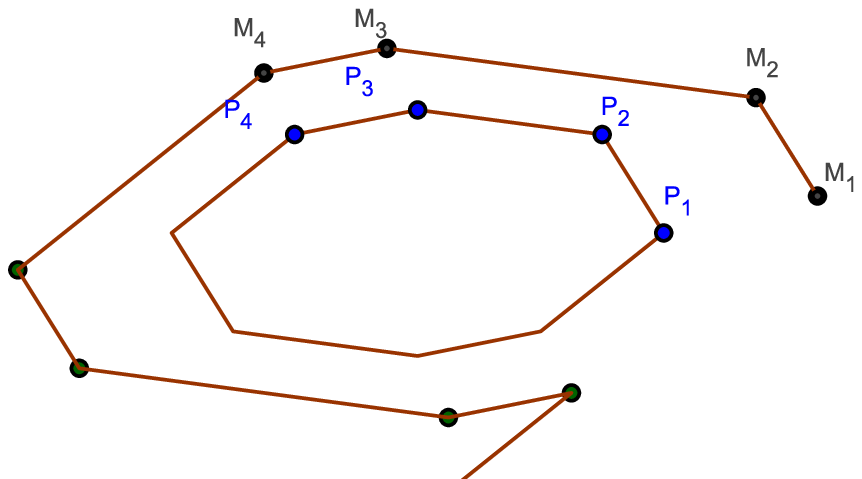}%
}
\caption{While $\vec{r}=\left( \frac{1}{2},1,\frac{3}{2},1,\frac{1}{2},1,\frac{3}{2},1\right)$ closes the line, $\vec{r}=\left(  1,2,1,2,1,2,1,-1\right)  $ does not}
\end{figure}

Our goal in this section is to pair up algebraic properties of $\vec{r}%
\in\mathbb{R}^{2n}$ with geometric properties of $M$ (compare this to the
similar analysis done in \cite{Evolutes}). We start defining a suitable inner
product in $L_{P}$:

\begin{definition}
Given two radii vectors $\vec{r}$ and $\vec{s}$ in $L_{P}$, we define their
$P$\emph{-inner product} as%
\[
\left\langle \vec{r},\vec{s}\right\rangle _{P}=\sum_{i=1}^{2n}\frac{r_{i}%
s_{i}}{\beta_{i}}%
\]

\end{definition}

Some interesting subspaces of $L_{P}$ are listed below:

\begin{itemize}
\item $C_{P}:$ the space of all \textbf{closed} polygons $M.$ Given our
periodicity condition on the sides, it is enough to check if $M_{2n+1}=M_{1}$,
that is%
\[
\sum_{i=1}^{2n}\Delta_{i}M=\vec{0}%
\]
or, in terms of the radii%
\begin{equation}
\sum_{i=1}^{2n}r_{i}\Delta_{i}P=\vec{0} \label{ClosedC}%
\end{equation}
Since this last condition is linear, $C_{P}$ is indeed a subspace of $L_{P}$,
and $\dim C_{P}=2n-2$ (since there must be two linearly independent
$\Delta_{i}P$).

\item $S_{P}:$ the space of all \textbf{symmetric} polygons $M$. Choosing the
origin as the center of symmetry, this condition translates to $M_{i+n}%
=-M_{i}$ for all $i$, or equivalently%
\[
r_{i+n}=r_{i}\text{\ \ \ \ \ }\left(  i=1,2,...,n\right)
\]
Clearly (both geometrically and algebraically), $S_{P}\subseteq C_{P}$, and
$\dim S_{P}=n$.

\item $A_{P}:$ the space of all \textbf{anti-symmetric} $P-$polygons $M$,
which we define algebraically by the condition%
\[
r_{i+n}=-r_{i}\ \ \ \ \ \left(  i=1,2,...,n\right)
\]
It is easy to see that $\dim A_{P}=n$; actually, under our inner product,%
\[
A_{P}=\left(  S_{P}\right)  ^{\perp}\text{.}%
\]

\item $D_{P}:$ the space of \textbf{double} polygons, that is, such that
$M_{i+n}=M_{i}$ for all $i$. This is equivalent to $M_{1}=M_{n+1}$ and
$\Delta_{i}M=\Delta_{i+n}M$ for all $i$, or%
\[
\left\{
\begin{array}
[c]{c}%
r_{i+n}=-r_{i}\ \ \ \ \ \left(  i=1,2,...,n\right) \\
\sum_{i=1}^{n}r_{i}\Delta_{i}P=0
\end{array}
\right.  \text{.}%
\]
Given the first condition, the second is equivalent to Equation \ref{ClosedC}
so $D_{P}=A_{P}\cap C_{P}$ and $\dim D_{P}=n-2$. In fact, since $D_{P}\perp
S_{P}$ and $\dim D_{P}+\dim S_{P}=\dim C_{P}$, we also have%
\[
C_{P}=S_{P}\oplus D_{P}%
\]

\end{itemize}

More specifically, $D_{P}$ is the orthogonal complement of $S_{P}$ in $C_{P}$.

\begin{itemize}
\item $B_{P}:$ the space of all \textbf{balls }(homothetic to $P$), which
consists of multiples of the vector $\mathbf{1}=\left(  1,1,...,1\right)  $.
Clearly $\dim B_{P}=1$ and $B_{P}\subseteq S_{P}$.
\end{itemize}

In order to further geometrically characterize subspaces of $L_{P}$, we turn
our attention to:

\begin{definition}
The \emph{support} associated to the side $M_{i}M_{i+1}$ is the signed
distance $h_{i}$ from the origin to that side, normalized to have value $1$
when the polygon is the $P$-ball itself. More explicitly:%
\[
h_{i}=\left[  M_{i},Q_{i}\right]  =\left[  M_{i+1},Q_{i}\right]
\]
The \emph{support function} is the list of values $\left\{  h_{i}\right\}
_{i\in\mathbb{Z}}$.
\end{definition}

\begin{proposition}
The radii vector depends linearly on the support function. Explicitly,%
\begin{equation}
r_{i}=h_{i}+\beta_{i}\nabla_{i}\left(  \alpha_{i}\Delta_{i}h\right)
\label{RandH}%
\end{equation}

\end{proposition}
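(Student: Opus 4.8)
The plan is to translate the identity (\ref{RandH}) entirely into the language of determinants $[\cdot,\cdot]$, exploiting the structural relations (\ref{PQ}) and (\ref{AlphaandBeta}) between $P$ and $Q$. Two preliminary facts will do most of the bookkeeping. First, from $Q_i=\beta_i\Delta_iP$ we read off $\Delta_iP=\beta_i^{-1}Q_i$; second, the condition $[\Delta_iP,Q_i]=0$ gives $[P_{i+1},Q_i]=[P_i,Q_i]=1$. These let me convert any determinant taken against $\Delta_iP$ or against $\Delta_iM=r_i\Delta_iP$ into a clean multiple of $h_i$ or $r_i$.

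The key idea is to introduce the auxiliary ``conjugate support'' $k_i:=[P_i,M_i]$ and to show that the inner expression on the right-hand side of (\ref{RandH}) telescopes into it. Using the two equivalent forms $h_i=[M_{i+1},Q_i]$ and $h_{i+1}=[M_{i+1},Q_{i+1}]$ of the support, I get $\Delta_ih=[M_{i+1},\Delta_iQ]$; substituting $\Delta_iQ=-\alpha_i^{-1}P_{i+1}$ from (\ref{PQ}) then yields
\[
\alpha_i\,\Delta_ih=[P_{i+1},M_{i+1}]=k_{i+1}.
\]
Consequently the outer backward difference collapses to a plain forward difference of $k$, namely $\nabla_i(\alpha_i\Delta_ih)=k_{i+1}-k_i=\Delta_ik$.

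It then remains to expand $\Delta_ik=\Delta_i[P,M]$ by the discrete Leibniz rule $\Delta_i[P,M]=[\Delta_iP,M_i]+[P_{i+1},\Delta_iM]$ (the same product rule already applied to $[P_i,Q_i]$ in Section \ref{SecMink}). By the two preliminary facts, the first term becomes $\beta_i^{-1}[Q_i,M_i]=-\beta_i^{-1}h_i$, and the second becomes $r_i\beta_i^{-1}[P_{i+1},Q_i]=\beta_i^{-1}r_i$. Hence $\nabla_i(\alpha_i\Delta_ih)=\beta_i^{-1}(r_i-h_i)$, which rearranges at once into (\ref{RandH}); linearity of $r_i$ in the support is then manifest.

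I expect the only genuinely nonroutine step to be the middle one: recognizing that $\alpha_i\Delta_ih$ is exactly the shifted quantity $[P_{i+1},M_{i+1}]$, so that the outer $\nabla_i$ telescopes rather than producing a messy raw second difference of $h$. Once this ``antiderivative'' $k_i=[P_i,M_i]$ is identified, everything else is a direct application of the bilinearity of the determinant together with the defining relations of the dual polygon.
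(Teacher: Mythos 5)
Your proposal is correct and follows essentially the same route as the paper's own proof: the paper likewise first computes $\alpha_i\Delta_i h=[P_{i+1},M_{i+1}]$, then expands the backward difference via the Leibniz rule into $[P_{i+1},\Delta_i M]+[\Delta_i P,M_i]$ and evaluates each term using Eq.~(\ref{PQ}) to get $r_i-h_i$. The only cosmetic difference is that you name the intermediate quantity $k_i=[P_i,M_i]$ explicitly, while the paper carries it inline.
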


\begin{proof}
Just use Eq. \ref{PQ} a few times:%
\begin{align*}
\alpha_{i}\Delta_{i}h  &  =\alpha_{i}\left[  M_{i+1},\Delta_{i}Q\right]
=\left[  P_{i+1},M_{i+1}\right]  \Rightarrow\\
&  \Rightarrow\beta_{i}\nabla_{i}\left(  \alpha_{i}\Delta_{i}h\right)
=\beta_{i}\left(  \left[  P_{i+1},\Delta_{i}M\right]  +\left[  \Delta
_{i}P,M_{i}\right]  \right)  =\\
&  =\beta_{i}\left[  P_{i+1},r_{i}\Delta_{i}P\right]  +\left[  Q_{i}%
,M_{i}\right]  =r_{i}-h_{i} {\tag*{$\square$}}
\end{align*}

\end{proof}

While the support function is not invariant by translations, the \emph{width
}of a polygon is:

\begin{definition}
The $P-$\emph{width} $w_{i}$ of $M$ between the sides $M_{i}M_{i+1}$ and
$M_{i+n}M_{i+n+1}$ is the signed distance between such sides, taking $P$ as
the unit reference ball. In other words%
\[
w_{i}=h_{i}+h_{i+n}=\left[  M_{i}-M_{i+n},Q_{i}\right]  =\left[
M_{i+1}-M_{i+n+1},Q_{i}\right]
\]

\end{definition}

Geometrically, $P$-polygons of constant $P$-width are characterized by:

\begin{proposition}
A $P$-polygon $M$ has constant $P$-width if and only if each of its
\textquotedblleft major\textquotedblright\ diagonals is parallel to the
corresponding major diagonal of $P$, that is,%
\[
\left[  M_{i}-M_{i+n},P_{i}\right]  =0\ \ \ \ \ \left(  i=1,2,...,n\right)
\text{.}%
\]
A $P$-polygon $M$ has constant $P$-width $0$ if and only if%
\[
M_{i+n}=M_{i}\ \ \ \ \ \left(  i=1,2,...,n\right)  \text{.}%
\]
In other words, $D_{P}$ is the space of $P$-polygons of width $0$.
\end{proposition}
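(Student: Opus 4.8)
The plan is to compute how the width changes with $i$ and relate its discrete derivative to the quantity $\left[M_{i}-M_{i+n},P_{i}\right]$. Write $d_{i}=M_{i}-M_{i+n}$ for the diagonal vector joining opposite vertices, so that by the definition of width $w_{i}=\left[d_{i},Q_{i}\right]$. Because both $P$ and $Q$ are symmetric, one checks that $d_{i+n}=-d_{i}$ and $Q_{i+n}=-Q_{i}$, whence $w_{i+n}=w_{i}$; thus it suffices to work with all $i\in\mathbb{Z}$, and ``constant $P$-width'' simply means $\Delta_{i}w=0$ for every $i$.

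First I would differentiate the diagonal. Using $\Delta_{i}M=r_{i}\Delta_{i}P$ together with $P_{i+n}=-P_{i}$ (hence $\Delta_{i+n}P=-\Delta_{i}P$), a short computation gives
\[
\Delta_{i}d=\Delta_{i}M-\Delta_{i+n}M=\left(r_{i}+r_{i+n}\right)\Delta_{i}P .
\]
In particular $\Delta_{i}d$ is parallel to $\Delta_{i}P$, and therefore to $Q_{i}$ by Eq. (\ref{PQ}). Next I would apply the discrete product rule to $w_{i}=\left[d_{i},Q_{i}\right]$, namely $\Delta_{i}w=\left[\Delta_{i}d,Q_{i}\right]+\left[d_{i+1},\Delta_{i}Q\right]$. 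The first term vanishes since $\Delta_{i}d\parallel Q_{i}$. For the second term, Eq. (\ref{PQ}) gives $\Delta_{i}Q=-\frac{1}{\alpha_{i}}P_{i+1}$, so that
\[
\Delta_{i}w=-\frac{1}{\alpha_{i}}\left[d_{i+1},P_{i+1}\right].
\]
Since every $\alpha_{i}>0$, this identity immediately yields the first equivalence: $w$ is constant if and only if $\left[d_{i+1},P_{i+1}\right]=0$ for all $i$, that is, $\left[M_{i}-M_{i+n},P_{i}\right]=0$ for all $i$.

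For the width-$0$ statement, observe that width $0$ is a special case of constant width, so the first part gives $\left[d_{i},P_{i}\right]=0$; combined with $w_{i}=\left[d_{i},Q_{i}\right]=0$ and the fact that $\{P_{i},Q_{i}\}$ is a basis of $\mathbb{R}^{2}$ (because $\left[P_{i},Q_{i}\right]=1$), we conclude $d_{i}=0$, i.e. $M_{i+n}=M_{i}$; the converse is trivial. This is precisely the defining condition of $D_{P}$, which gives the final identification.

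The main obstacle is the bookkeeping in the two difference computations: obtaining $\Delta_{i}d=\left(r_{i}+r_{i+n}\right)\Delta_{i}P$ correctly (which hinges on the symmetry relation $\Delta_{i+n}P=-\Delta_{i}P$) and applying the product rule so that the cross term drops out while the surviving term is exactly a multiple of $\left[d_{i+1},P_{i+1}\right]$. Once these signs are handled, the two equivalences follow purely from linear algebra.
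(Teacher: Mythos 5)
Your core computation is correct and, modulo packaging, it is the paper's own argument: your product-rule identity $\Delta_{i}w=\left[\Delta_{i}d,Q_{i}\right]+\left[d_{i+1},\Delta_{i}Q\right]$, with the first term vanishing because $\Delta_{i}d\parallel\Delta_{i}P\parallel Q_{i}$, is exactly the paper's observation that $w_{i-1}$ can also be written as $\left[M_{i}-M_{i+n},Q_{i-1}\right]$ (the equality of the two representations of $w_{i-1}$ \emph{is} the vanishing of that cross term), and both proofs then finish by converting $\Delta Q$ into $P$ via Eq.~(\ref{PQ}). The width-zero part -- pairing $\left[d_{i},P_{i}\right]=0$ with $\left[d_{i},Q_{i}\right]=0$ and invoking the linear independence of $P_{i},Q_{i}$ from $\left[P_{i},Q_{i}\right]=1$ -- is verbatim the paper's.

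One auxiliary claim is false, though: $d_{i+n}=-d_{i}$ does \textbf{not} follow from symmetry of $P$ and $Q$ alone. In fact $d_{i}+d_{i+n}=M_{i}-M_{i+2n}$, which is minus the drift of one full period, so it vanishes precisely when $M$ is closed; likewise $w_{i+n}-w_{i}=\left[M_{i+2n}-M_{i},Q_{i}\right]$, so the width sequence is $n$-periodic only for closed polygons (the paper's own open example $\vec{r}=\left(1,2,1,2,1,2,1,-1\right)$ violates it). You use this claim only to reconcile ``$\Delta_{i}w=0$ for all $i\in\mathbb{Z}$'' with the index range $i=1,\dots,n$ in the statement; for a non-closed $M$ that reconciliation genuinely fails (the $n$ stated conditions then do not control the remaining widths), but this is a looseness in the proposition itself: the paper's proof, like your identity $\Delta_{i}w=-\alpha_{i}^{-1}\left[d_{i+1},P_{i+1}\right]$, really establishes only the index-by-index equivalence $w_{i}=w_{i-1}\Leftrightarrow\left[M_{i}-M_{i+n},P_{i}\right]=0$ and never addresses the index range. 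So either delete the periodicity remark and state the first equivalence with $i$ ranging over $\mathbb{Z}$, or note explicitly that $w_{i+n}=w_{i}$ is being assumed via closedness; with that repair your proof is exactly on par with the paper's.
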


\begin{proof}
For the first statement, we just need to remember once again Equation \ref{PQ}
and write%
\[
w_{i}=w_{i-1}\Leftrightarrow\left[  M_{i}-M_{i+n},Q_{i}\right]  =\left[
M_{i}-M_{i+n},Q_{i-1}\right]  \Leftrightarrow\left[  M_{i}-M_{i+n}%
,\Delta_{i-1}Q\right]  =0\Leftrightarrow\left[  M_{i}-M_{i+n},P_{i}\right]
=0
\]
For the second, just note that%
\begin{align*}
w_{i}  &  =0\Rightarrow\left[  M_{i}-M_{i+n},Q_{i}\right]  =0\\
w_{i}  &  =w_{i-1}\Rightarrow\left[  M_{i}-M_{i+n},P_{i}\right]  =0
\end{align*}
and, since $\left[  P_{i},Q_{i}\right]  =1$, these two equations are linearly
independent, implying $M_{i}=M_{i+n}$.
\qed \end{proof}

Adding support functions is the same as performing a \emph{Minkowski Sum} of
the corresponding polygons (see \cite{Thompson}, for example). Therefore, the
statement $C_{P}=S_{P}\oplus D_{P}$ can be translated as "every closed $P
$-polygon is the (Minkowski) sum of a symmetric polygon and a polygon of $0 $ width".

Though the support function cannot be determined by the radii, it is easy to
see that the width $\vec{w}$ depends linearly on $\vec{r}$, since%
\[
w_{i}=-\left[  M_{i+n}-M_{i+1},Q_{i}\right]  =-\left[  \sum_{k=i+1}%
^{i+n-1}\Delta_{k}M,Q_{i}\right]  =-\sum_{k=i+1}^{i+n-1}r_{k}\left[
\Delta_{k}P,Q_{i}\right]
\]
In particular, we have%
\[
\vec{w}\left(  \vec{r}+\lambda\mathbf{1}\right)  =\vec{w}\left(  \vec
{r}\right)  +\lambda\vec{w}\left(  \mathbf{1}\right)  =\vec{w}\left(  \vec
{r}\right)  +2\lambda\cdot\mathbf{1}%
\]
since the unit ball has constant width $2$. So, if $\vec{r}$ describes a
polygon with constant width $2\lambda_{0}$, then $\vec{r}-\lambda
_{0}\mathbf{1}$ will be a polygon with constant width $0$! This leads us to define:

\begin{itemize}
\item $W_{P}:$ the space of all \textbf{constant-width} $P$-polygons. Then
$W_{P}=D_{P}\oplus B_{P}$, and $\dim W_{P}=n-1.$ In particular, $W_{P}%
\subseteq C_{P}$.
\end{itemize}

\section{Evolutes and double evolutes\label{SecEvol}}

\subsection{Evolutes}

Given the definition of the curvature radii, we can \textquotedblleft
fit\textquotedblright\ a ball $P$ of radius $r_{i}$ to the side $\Delta_{i}M
$. We can then join the centers of such balls to form a new polygonal line:

\begin{definition}
The $P-$\emph{evolute} of $M$ is the polygonal line $E$ whose vertices are%
\begin{equation}
E_{i}=M_{i}-r_{i}P_{i}=M_{i+1}-r_{i}P_{i+1}\text{.} \label{Evolute}%
\end{equation}

\end{definition}

\begin{figure}[h]%
\centering
\includegraphics[height=1.5 in, width=3 in] {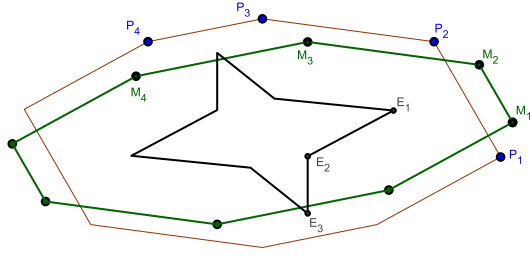}%
\caption{A polygon $M$ and its $P-$evolute $E$}%
\label{FigEvolute}%
\end{figure}

This definition is the discrete version of the evolute in \cite{Evolutes}.
Now, $E$ is a $Q$-polygon, since%
\[
\Delta_{i}E=\left(  M_{i+1}-r_{i+1}P_{i+1}\right)  -\left(  M_{i+1}%
-r_{i}P_{i+1}\right)  =-\Delta_{i}r\cdot P_{i+1}=\alpha_{i}\Delta_{i}%
r\cdot\Delta_{i}Q
\]
This means we can represent $E$ by its curvature radii \emph{with respect to
}$Q$, namely%
\begin{equation}
s_{i}=\alpha_{i}\Delta_{i}r \label{Evolute2}%
\end{equation}
So, using the radius representation, the evolute process $E_{P}:L_{P}%
\rightarrow L_{Q}$ is a linear transformation, whose $2n\times2n$ matrix can
be explicitly written as%
\[
E_{P}=\left[
\begin{array}
[c]{cccccc}%
-\alpha_{1} & \alpha_{1} & 0 & ... & 0 & 0\\
0 & -\alpha_{2} & \alpha_{2} & ... & 0 & 0\\
0 & 0 & -\alpha_{3} & ... & 0 & 0\\
\vdots & \vdots & \vdots & \ddots & \vdots & \vdots\\
0 & 0 & 0 & ... & -\alpha_{2n-1} & \alpha_{2n-1}\\
\alpha_{2n} & 0 & 0 & ... & 0 & -\alpha_{2n}%
\end{array}
\right]
\]
In order to characterize it geometrically, we need:

\begin{definition}
Given a periodic $P$-polygon $M$ represented by the radius vector $\vec{r}$,
its \emph{(signed) }$Q$\emph{-length }is%
\[
L_{Q}\left(  M\right)  =\left\langle r,\mathbf{1}\right\rangle _{P}=\sum
_{i=1}^{2n}\frac{r_{i}}{\beta_{i}}%
\]
which is the signed length of (one period of) $M$, taking $Q$ as the unit
ball, since (from Eq. \ref{PQ})%
\[
\Delta_{i}M=r_{i}\Delta_{i}P=\frac{r_{i}}{\beta_{i}}\cdot Q_{i}%
\]
Similarly, given a periodic $Q$-polygon $N$ represented by $\vec{s}$, its
\emph{(signed) }$P$\emph{-length} is%
\[
L_{P}\left(  N\right)  =\left\langle s,\mathbf{1}\right\rangle _{Q}=\sum
_{i=1}^{2n}\frac{s_{i}}{\alpha_{i}}%
\]
which is the signed length of $N$, taking $P$ as the unit ball, since%
\[
\Delta_{i}N=s_{i}\Delta_{i}Q=-\frac{s_{i}}{\alpha_{i}}\cdot P_{i+1}%
\]

\end{definition}

\begin{proposition}
The image of $E_{P}$ is the space of all $Q$-polygons of zero $P$-length, and
its kernel is $B_{P}$.
\end{proposition}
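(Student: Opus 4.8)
The plan is to prove both claims at once via a rank--nullity argument: compute the kernel directly, check that the image lies inside the zero-$P$-length space by a telescoping sum, and then close the gap with a dimension count.

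First I would compute the kernel. By Equation \ref{Evolute2}, the image $E_{P}(\vec{r})$ has entries $s_{i}=\alpha_{i}\Delta_{i}r$. Since all $\alpha_{i}>0$ (established in Section \ref{SecMink}), the condition $E_{P}(\vec{r})=\vec{0}$ forces $\Delta_{i}r=0$ for every $i$, i.e. $r_{i+1}=r_{i}$ for all $i$. Hence $\vec{r}$ must be a multiple of $\mathbf{1}$, which is exactly $B_{P}$. This gives $\ker E_{P}=B_{P}$ with $\dim\ker E_{P}=1$, so by rank--nullity the image has dimension $2n-1$.

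Next I would show the image is contained in the space of zero-$P$-length $Q$-polygons. Writing $N=E_{P}(M)$ with radii $s_{i}=\alpha_{i}\Delta_{i}r$, the definition of $P$-length yields
\[
L_{P}(N)=\sum_{i=1}^{2n}\frac{s_{i}}{\alpha_{i}}=\sum_{i=1}^{2n}\Delta_{i}r=r_{2n+1}-r_{1}=0,
\]
the telescoping sum vanishing because $\vec{r}$ is $2n$-periodic (Equation \ref{Period}). Thus every element of the image has zero $P$-length. Since the functional $\vec{s}\mapsto\sum_{i}s_{i}/\alpha_{i}$ is not identically zero, its kernel --- the zero-$P$-length space --- is a hyperplane of dimension $2n-1$. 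A subspace of dimension $2n-1$ contained in another of the same dimension must coincide with it, which finishes the argument.

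There is no genuine obstacle here: the positivity of the $\alpha_{i}$ trivializes the kernel, and the alternating bidiagonal structure of $E_{P}$ makes $L_{P}\circ E_{P}$ collapse to a telescoping sum. The only point deserving care is verifying that both the image and the zero-$P$-length space truly have dimension $2n-1$, which the rank--nullity theorem and the nonvanishing of the length functional supply immediately. It is worth noting that this is the discrete shadow of the classical fact that the evolute of a closed curve has vanishing total signed length.
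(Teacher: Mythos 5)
Your proposal is correct and follows essentially the same route as the paper: compute the kernel directly from $s_{i}=\alpha_{i}\Delta_{i}r$, observe that $L_{P}(E_{P}\vec{r})=\sum_{i}\Delta_{i}r=0$ by telescoping, and conclude by matching the dimension $2n-1$ of the image (rank--nullity) with that of the zero-$P$-length hyperplane. The only difference is that you spell out explicitly the points the paper leaves implicit (positivity of the $\alpha_{i}$, periodicity in the telescoping sum, and the nonvanishing of the length functional), which is fine.
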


\begin{proof}
The kernel is easy:%
\[
s_{i}=0\Leftrightarrow\Delta_{i}r=0\Leftrightarrow r=\lambda\mathbf{1}%
\]
so we know that $rank\left(  E_{P}\right)  =2n-1$. Now, the $P$-length of the
evolute $E$ is%
\[
L_{P}\left(  E\right)  =\sum_{i=1}^{2n}\frac{s_{i}}{\alpha_{i}}=\sum
_{i=1}^{2n}\Delta_{i}r=0
\]
and since the condition $L_{P}\left(  N\right)  =0$ determines a subspace of
dimension $2n-1$ as well, we conclude it must be the whole image of $E_{P}$.
\qed \end{proof}

\subsection{Double evolutes}

But why stop there? If $E$ is a $Q$-polygon, we can find the evolute of $E$
taking $Q$ as the new reference ball! Namely, we find a new curve $F$ given
by:%
\[
F_{i+1}=E_{i+1}-s_{i}Q_{i+1}=E_{i}-s_{i}Q_{i}%
\]
where we shifted the indices in $F$ to compensate for the two forward
differences we have taken. We have%
\[
\Delta_{i}F=\left(  E_{i}-s_{i}Q_{i}\right)  -\left(  E_{i}-s_{i-1}%
Q_{i}\right)  =-\nabla_{i}s\cdot Q_{i}=-\beta_{i}\nabla_{i}s\cdot\Delta_{i}P
\]
so $F$ is a $P$-polygon again! The matrix of this second evolute transform
$E_{Q}:L_{Q}\rightarrow L_{P}$ is%
\[
E_{Q}=\left[
\begin{array}
[c]{cccccc}%
-\beta_{1} & 0 & 0 & ... & 0 & \beta_{1}\\
\beta_{2} & -\beta_{2} & 0 & ... & 0 & 0\\
0 & \beta_{3} & -\beta_{3} & ... & 0 & 0\\
\vdots & \vdots & \vdots & \ddots & \vdots & \vdots\\
0 & 0 & 0 & ... & -\beta_{2n-1} & 0\\
0 & 0 & 0 & ... & \beta_{2n} & -\beta_{2n}%
\end{array}
\right]
\]

\begin{figure}[h]%
\centering
\includegraphics[
height=1.5 in,
width=3 in
]%
{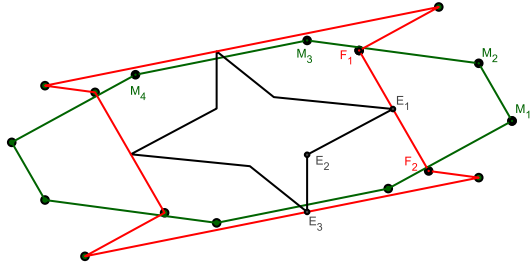}%
\caption{A polygon $M$ and its double evolute $F$}%
\label{FigDoubleEvo}%
\end{figure}

\begin{definition}
With the notation above, $F$ is the \emph{double evolute} of $M$. Explicitly,
$F$ is the curve whose curvature radii are:%
\[
t_{i}=-\beta_{i}\nabla_{i}s=-\beta_{i}\nabla_{i}\left(  \alpha_{i}\Delta
_{i}r\right)
\]
so the matrix of the \emph{double evolute transform} $T_{P}$ is%
\begin{equation}
T_{P}=E_{Q}E_{P}=\left[
\begin{array}
[c]{ccccc}%
\beta_{1}\left(  \alpha_{1}+\alpha_{2n}\right)  & -\alpha_{1}\beta_{1} & 0 &
... & -\alpha_{2n}\beta_{1}\\
-\alpha_{1}\beta_{2} & \beta_{2}\left(  \alpha_{2}+\alpha_{1}\right)  &
-\alpha_{2}\beta_{2} & ... & 0\\
0 & -\alpha_{2}\beta_{3} & \beta_{3}\left(  \alpha_{3}+\alpha_{2}\right)  &
... & 0\\
\vdots & \vdots & \vdots & \ddots & \vdots\\
0 & 0 & 0 & ... & -\alpha_{2n-1}\beta_{2n-1}\\
-\alpha_{2n}\beta_{2n} & 0 & 0 & ... & \beta_{2n}\left(  \alpha_{2n}%
+\alpha_{2n-1}\right)
\end{array}
\right]  \label{T}%
\end{equation}

\end{definition}

We quickly note that $T_{P}$ is invariant by ball rescaling, since a rescaling
on $P$ implies in the reverse scaling on $Q$. We are now ready to justify our
choices of inner products:

\begin{proposition}
For any vectors $\vec{r}\in L_{P}$ and $\vec{w}\in L_{Q}$, we have%
\[
\left\langle E_{P}\vec{r},\vec{w}\right\rangle _{Q}=\left\langle \vec{r}%
,E_{Q}\vec{w}\right\rangle _{P}%
\]
that is, $E_{Q}=E_{P}^{\ast}$ with this choice of inner products.
\end{proposition}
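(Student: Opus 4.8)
The plan is to expand both sides of the claimed identity using the difference-operator descriptions of $E_{P}$ and $E_{Q}$ together with the explicit formulas for the two inner products, and then to recognize the resulting equation as a discrete summation by parts. Recall that $E_{P}$ sends $\vec{r}\in L_{P}$ to the $Q$-radii vector $\vec{s}$ with $s_{i}=\alpha_{i}\Delta_{i}r$ (Eq.~\ref{Evolute2}), while $E_{Q}$ sends $\vec{w}\in L_{Q}$ to the $P$-radii vector $\vec{t}$ with $t_{i}=-\beta_{i}\nabla_{i}w$. Using the $Q$-inner product (defined analogously to $\langle\cdot,\cdot\rangle_{P}$, with $\alpha_{i}$ in place of $\beta_{i}$), the first step is to write
\[
\left\langle E_{P}\vec{r},\vec{w}\right\rangle _{Q}=\sum_{i=1}^{2n}\frac{\left(  \alpha_{i}\Delta_{i}r\right)  w_{i}}{\alpha_{i}}=\sum_{i=1}^{2n}\left(  \Delta_{i}r\right)  w_{i}.
\]
The crucial point here -- and the real reason these particular weights $\alpha_{i},\beta_{i}$ were built into the inner products -- is that each weight cancels exactly against the one appearing in the corresponding evolute operator, leaving an expression with no weights at all.

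Next I would carry out the same computation on the other side, obtaining
\[
\left\langle \vec{r},E_{Q}\vec{w}\right\rangle _{P}=\sum_{i=1}^{2n}\frac{r_{i}\left(  -\beta_{i}\nabla_{i}w\right)  }{\beta_{i}}=-\sum_{i=1}^{2n}r_{i}\left(  \nabla_{i}w\right)  .
\]
It then remains to verify the purely combinatorial identity $\sum_{i}\left(  \Delta_{i}r\right)  w_{i}=-\sum_{i}r_{i}\left(  \nabla_{i}w\right)$. I would establish this by Abel (summation-by-parts) manipulation: expanding $\Delta_{i}r=r_{i+1}-r_{i}$, reindexing the term $\sum_{i}r_{i+1}w_{i}$ via $j=i+1$, and invoking the $2n$-periodicity (Eq.~\ref{Period}) to see that the shifted sum runs over the same index set, so that
\[
\sum_{i=1}^{2n}r_{i+1}w_{i}=\sum_{i=1}^{2n}r_{i}w_{i-1}
\]
and hence $\sum_{i}\left(  \Delta_{i}r\right)  w_{i}=\sum_{i}r_{i}\left(  w_{i-1}-w_{i}\right)  =-\sum_{i}r_{i}\nabla_{i}w$, as required.

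The computation is short, and the only thing demanding any care is the index shift combined with the periodicity convention (indices taken mod $2n$): it is precisely this periodicity that makes the ``boundary'' terms cancel, which would fail for a non-periodic polygon. For that reason I do not anticipate a genuine obstacle -- the content of the proposition is essentially the observation that the weighted inner products $\langle\cdot,\cdot\rangle_{P}$ and $\langle\cdot,\cdot\rangle_{Q}$ are tailored exactly so that the two evolute operators become discrete adjoints under summation by parts, and the relation $E_{Q}=E_{P}^{\ast}$ follows immediately since $\vec{r}$ and $\vec{w}$ were arbitrary.
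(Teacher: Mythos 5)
Your proof is correct and follows essentially the same route as the paper: expand both inner products so that the weights $\alpha_{i}$ and $\beta_{i}$ cancel against those in the evolute operators, then identify the two resulting unweighted sums via summation by parts using the $2n$-periodicity. The paper compresses your Abel-summation step into the remark that the two sums are rearrangements of each other, but the argument is the same.
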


\begin{proof}
Just write%
\begin{align*}
\left\langle E_{P}\vec{r},\vec{w}\right\rangle _{Q}  &  =\sum_{i=1}^{2n}%
\frac{\alpha_{i}\Delta_{i}r}{\alpha_{i}}\cdot w_{i}=\sum_{i=1}^{2n}\Delta
_{i}r\cdot w_{i}\\
\left\langle \vec{r},E_{Q}\vec{w}\right\rangle _{P}  &  =\sum_{i=1}^{2n}%
\frac{r_{i}}{\beta_{i}}\cdot\left(  -\beta_{i}\nabla_{i}w\right)  =-\sum
_{i=1}^{2n}r_{i}\nabla_{i}w
\end{align*}
and the two sums are just rearrangements of each other.
\qed \end{proof}

\begin{proposition}
$T_{P}$ is self-adjoint (therefore non-negative), and $C_{P}$ is invariant by
$T_{P}$.
\end{proposition}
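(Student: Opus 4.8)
The plan is to handle the two assertions independently, in both cases building directly on the adjoint relation $E_Q = E_P^*$ just established. For self-adjointness, I would simply note that $T_P = E_Q E_P = E_P^* E_P$, whence
\[
T_P^* = (E_P^* E_P)^* = E_P^*\,(E_P^*)^* = E_P^* E_P = T_P .
\]
Non-negativity is then a formal consequence: for every $\vec r \in L_P$,
\[
\langle T_P \vec r, \vec r\rangle_P = \langle E_P^* E_P \vec r, \vec r\rangle_P = \langle E_P \vec r, E_P \vec r\rangle_Q \geq 0 ,
\]
the last inequality holding because $\langle \cdot, \cdot\rangle_Q$ (a sum of the $s_i^2/\alpha_i$ with all $\alpha_i>0$) is a genuine inner product.

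For the invariance of $C_P$, the key step is a lemma stating that a single evolute carries closed polygons to closed polygons, i.e. $E_P(C_P)\subseteq C_Q$ and, symmetrically, $E_Q(C_Q)\subseteq C_P$. Granting this, the composition immediately gives $T_P(C_P) = E_Q\bigl(E_P(C_P)\bigr) \subseteq E_Q(C_Q) \subseteq C_P$. To prove the lemma, I would recall that $E$ is the $Q$-polygon with radii $s_i = \alpha_i \Delta_i r$, so $E$ is closed precisely when $\sum_{i=1}^{2n} s_i\,\Delta_i Q = \vec 0$. Substituting $\alpha_i \Delta_i Q = -P_{i+1}$ from Eq. (\ref{PQ}) and then summing by parts over one full period --- where the telescoping term $\sum_i \Delta_i(r_i P_i)$ vanishes by $2n$-periodicity --- yields
\[
\sum_{i=1}^{2n} s_i\,\Delta_i Q = -\sum_{i=1}^{2n} \Delta_i r \cdot P_{i+1} = \sum_{i=1}^{2n} r_i\,\Delta_i P .
\]
The right-hand side is exactly the closedness condition (\ref{ClosedC}) for $M$, so $E$ is closed if and only if $M$ is; running the identical computation with the roles of $P$ and $Q$ interchanged gives $E_Q(C_Q)\subseteq C_P$.

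The argument is almost entirely formal once the adjoint relation is in hand, so I do not expect a serious obstacle. The only delicate point is the summation-by-parts identity: one must use the $2n$-periodicity of both $r$ and $P$ to discard the boundary term, and keep the index shift in $P_{i+1}$ straight. With that dispatched, both claims follow.
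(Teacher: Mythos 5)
Your proof is correct and follows essentially the same route as the paper: self-adjointness and non-negativity from $T_{P}=E_{P}^{\ast}E_{P}$, and invariance of $C_{P}$ via the identity $\sum s_{i}\Delta_{i}Q=\sum r_{i}\Delta_{i}P$, which is exactly the paper's summation-by-parts computation showing the ``drift'' is preserved by each evolute. The only cosmetic difference is that the paper also expands $\left\langle T_{P}\vec{r},\vec{r}\right\rangle _{P}$ explicitly as a sum of squares, whereas you obtain non-negativity formally from $\left\langle E_{P}\vec{r},E_{P}\vec{r}\right\rangle _{Q}\geq0$ --- both are fine.
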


\begin{proof}
The first statement follows directly from $T_{P}=E_{P}^{\ast}E_{P}$. More
explicitly, we have%
\begin{equation}
\left\langle T_{P}r,r\right\rangle _{P}=\sum_{i=1}^{2n}r_{i}^{2}\left(
\beta_{i}+\beta_{i-1}\right)  -\sum_{i=1}^{2n}2\beta_{i}r_{i}r_{i+1}%
=\sum_{i=1}^{2n}\beta_{i}\left(  r_{i}-r_{i+1}\right)  ^{2}\geq0 \label{NN}%
\end{equation}
with equality if, and only if, $r$ is a multiple of $\mathbf{1}$ (remember
that $\beta_{i}>0$ from Eq. \ref{SignalQ}). The second fact is geometrically
clear, but we also offer an algebraic proof: just remember that $C_{P}$ is
defined by $\sum r_{i}\cdot\Delta_{i}P=0$. But the expression on the left side
remains invariant under the evolute transform, since%
\[
\sum r_{i}\cdot\Delta_{i}P=-\sum\Delta_{i}r\cdot P_{i+1}=-\sum\frac{s_{i}%
}{\alpha_{i}}P_{i+1}=\sum s_{i}\cdot\Delta_{i}Q\text{.} {\tag*{$\square$}}
\]

\end{proof}

\section{Discrete cycloids\label{SecCycl}}

\begin{definition}
A \emph{discrete cycloid} is a polygonal line $M$ which is homothetic to its
double evolute.
\end{definition}

In other words, we want%
\[
\vec{t}=T_{P}\vec{r}=\lambda\vec{r}\text{,}%
\]
or, in operator notation,%
\begin{equation}
\beta_{i}\nabla_{i}\left(  \alpha_{i}\Delta_{i}r\right)  +\lambda r_{i}=0
\label{EqOper}%
\end{equation}
for some constant $\lambda$ -- an eigenvalue problem!

\begin{example}
[Regular Polygons]\label{ExReg}If $P$ is a regular polygon with $2n$ sides,
then so is $Q$. Let $\gamma=\frac{\pi}{2n}$. We might as well rescale $P$ so
$P$ and $Q$ are congruent to each other -- explicitly, this happens when
\[
\left\vert P_{i}\right\vert =\left\vert Q_{i}\right\vert =\frac{1}{\sqrt
{\cos\gamma}}\Rightarrow\alpha_{i}=\beta_{i}=\frac{1}{2\sin\gamma}\text{.}%
\]
So in this case the double evolute transform%
\[
T_{P}=\frac{1}{4\sin^{2}\gamma}\left[
\begin{array}
[c]{cccccc}%
2 & -1 & 0 & ... & 0 & -1\\
-1 & 2 & -1 & ... & 0 & 0\\
0 & -1 & 2 & ... & 0 & 0\\
\vdots & \vdots & \vdots & \ddots & \vdots & \vdots\\
0 & 0 & 0 & ... & 2 & -1\\
-1 & 0 & 0 & ... & -1 & 2
\end{array}
\right]
\]
is a discrete convolution! One can verify directly that%
\begin{equation}
\vec{c}_{k}=\left(  1,\cos2k\gamma,\cos4k\gamma,...,\cos2\left(  2n-1\right)
k\gamma\right)  ;\ k=0,1,2,...,2n-1 \label{RegularEV}%
\end{equation}
are eigenvectors, since for $\phi\in\left\{  0,2k\gamma,4k\gamma,...\right\}
$:%
\[
2\cos\phi-\cos\left(  \phi+2k\gamma\right)  -\cos\left(  \phi-2k\gamma\right)
=2\left(  1-\cos2k\gamma\right)  \cos\phi=4\sin^{2}k\gamma\cos\phi
\]
In other words, the eigenvalues are%
\[
\lambda_{k}=\frac{\sin^{2}k\gamma}{\sin^{2}\gamma}%
\]
for $k=0,1,2,...,2n-1$. Since $\lambda_{k}=\lambda_{2n-k}$, all of them are
double eigenvalues, except for $\lambda_{0}=0$ and $\lambda_{n}=\csc^{2}%
\gamma$. Also, note that $\vec{c}_{k}\in S_{P}$ if $k$ is even, and $\vec
{c}_{k}\in A_{P}$ if $k$ is odd. In other words, if we rename the pair
$\left(  \lambda_{k},\lambda_{n-k}\right)  $ as $\left(  \lambda_{k}%
^{1},\lambda_{k}^{2}\right)  $ for $k=1,2,...,n-1$, the eigenvalues can be
ordered this way:%
\[
\lambda_{0}\left(  =0\right)  <\lambda_{1}^{1}=\lambda_{1}^{2}=1<\lambda
_{2}^{1}=\lambda_{2}^{2}<\lambda_{3}^{1}=\lambda_{3}^{2}<...\lambda_{n-1}%
^{1}=\lambda_{n-1}^{2}<\lambda_{n}\left(  =\csc^{2}\gamma\right)
\]

\end{example}

Our goal is now to discover which parts of the spectral structure above remain
true in the general case. Before we do that, let us rephrase our problem in a
slightly different way -- as a recurrence.

\subsection{The half-turn transform}

Once a candidate value for $\lambda$ is given, we can see our problem as a
recurrence on the coordinates of $\vec{r}$, namely%
\begin{equation}
r_{k+1}=\left(  1+\frac{\alpha_{k-1}}{\alpha_{k}}-\frac{\lambda}{\alpha
_{k}\beta_{k}}\right)  r_{k}-\frac{\alpha_{k-1}}{\alpha_{k}}r_{k-1}\text{
\ \ \ }\left(  k=2,3,...\right)  \label{Rec}%
\end{equation}
or, in matricial form%
\begin{equation}
\left[
\begin{array}
[c]{c}%
r_{k}\\
r_{k+1}%
\end{array}
\right]  =S_{k}\left(  \lambda\right)  \left[
\begin{array}
[c]{c}%
r_{k-1}\\
r_{k}%
\end{array}
\right]  \text{ where }S_{k}\left(  \lambda\right)  =\left[
\begin{array}
[c]{cc}%
0 & 1\\
-\frac{\alpha_{k-1}}{\alpha_{k}} & 1+\frac{\alpha_{k-1}}{\alpha_{k}}%
-\frac{\lambda}{\alpha_{k}\beta_{k}}%
\end{array}
\right]  \label{RecM}%
\end{equation}
Since we want a solution $\vec{r}\in L_{P}$, the trick is to find special
values of $\lambda$, $r_{1}$ and $r_{2}$ such that $r_{2n+1}=r_{1}$ and
$r_{2n+2}=r_{2}$ in this recurrence. Actually, in order to separate symmetric
from anti-symmetric solutions, we should stop half-way and check the
relationship between $\left(  r_{1},r_{2}\right)  $ and $\left(
r_{n+1},r_{n+2}\right)  $.

\begin{definition}
Given $\lambda\in\mathbb{R}$, the \emph{half-turn transform }$H_{\lambda}$ is
the linear transformation which takes $\vec{\rho}=\left(  r_{1},r_{2}\right)
\in\mathbb{R}^{2}$ to $H_{\lambda}\vec{\rho}=\left(  r_{n+1},r_{n+2}\right)
\in\mathbb{R}^{2}$ according to the recurrence (\ref{Rec}) above. In other
words%
\[
H_{\lambda}=S_{n+1}\left(  \lambda\right)  \cdot S_{n}\left(  \lambda\right)
\cdot...\cdot S_{2}\left(  \lambda\right)  \text{.}%
\]

\end{definition}

Though $H_{\lambda}$ depends on our index choice (for example, $H_{\lambda} $
does \textbf{not} necessarily take $\left(  r_{2},r_{3}\right)  $ to $\left(
r_{n+2},r_{n+3}\right)  $), we note that $H_{\lambda}$ \textbf{does} take
$\left(  r_{n+1},r_{n+2}\right)  $ to $\left(  r_{2n+1},r_{2n+2}\right)  $,
because of the $n$-periodicity of the sequences $\alpha$ and $\beta$. Now we
can summarize:

\begin{itemize}
\item "Finding an eigenvector $\vec{r}\in L_{P}$ of $T_{P}$ (for the
eigenvalue $\lambda$)" is equivalent to\newline"finding an eigenvector
$\vec{\rho}\in\mathbb{R}^{2}$ of $H_{\lambda}^{2}$ (for the eigenvalue $1$)";

\item "Finding an eigenvector $\vec{r}\in S_{P}$ of $T_{P}$ (for the
eigenvalue $\lambda$)" is equivalent to\newline"finding an eigenvector
$\vec{\rho}\in\mathbb{R}^{2}$ of $H_{\lambda}$ (for the eigenvalue $1$)";

\item "Finding an eigenvector $\vec{r}\in A_{P}$ of $T_{P}$ (for the
eigenvalue $\lambda$)" is equivalent to\newline"finding an eigenvector
$\vec{\rho}\in\mathbb{R}^{2}$ of $H_{\lambda}$ (for the eigenvalue $-1$)".
\end{itemize}

\begin{proposition}
\label{Cases}For any $\lambda\in\mathbb{R}$%
\[
\det H_{\lambda}=1
\]
This restricts $H_{\lambda}$ to six possibilities, according to the
\textbf{geometric} multiplicity of its eigenvalues:\newline Case $1$. If
$H_{\lambda}=I$, then $\lambda$ is a double eigenvalue of $T_{P}$, and both
its eigenvectors are in $S_{P}$.\newline Case $2$. If $H_{\lambda}=-I$, then
$\lambda$ is a double eigenvalue of $T_{P}$, and both its eigenvectors are in
$A_{P}$.\newline Case $3$. If $H_{\lambda}$ has a single eigenvalue $1$, then
$\lambda$ is a single eigenvalue of $T_{P}$, with its eigenvector in $S_{P}%
$.\newline Case $4$. If $H_{\lambda}$ has a single eigenvalue $-1$, then
$\lambda$ is a single eigenvalue of $T_{P}$, with its eigenvector in $A_{P}%
$.\newline Case $5$. If $H_{\lambda}$ has two distinct real eigenvalues $\mu$
and $\mu^{-1}$, then $\lambda$ is not an eigenvalue of $T_{P}$.\newline Case
$6$. If $H_{\lambda}$ has two complex eigenvalues $e^{\pm i\theta}$, then
$\lambda$ is not an eigenvalue of $T_{P}$.
\end{proposition}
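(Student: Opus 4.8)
The plan is to separate the two assertions. The identity $\det H_\lambda=1$ is a short telescoping computation, and the six-case list is nothing more than the classification of $2\times2$ real matrices of unit determinant, reinterpreted through the three correspondences recorded just above.

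For the determinant, read $\det S_k(\lambda)=\alpha_{k-1}/\alpha_k$ directly off the matrix in (\ref{RecM}). Since $H_\lambda=S_{n+1}(\lambda)\cdots S_2(\lambda)$, multiplicativity of the determinant telescopes the product to
\[
\det H_\lambda=\prod_{k=2}^{n+1}\frac{\alpha_{k-1}}{\alpha_k}=\frac{\alpha_1}{\alpha_{n+1}}.
\]
Because $Q$ is symmetric we have $\left[Q_{i+n},Q_{i+n+1}\right]=\left[-Q_i,-Q_{i+1}\right]=\left[Q_i,Q_{i+1}\right]$, so $\alpha$ is $n$-periodic and $\alpha_{n+1}=\alpha_1$; hence $\det H_\lambda=1$.

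Being in $SL_2(\mathbb{R})$, the characteristic polynomial of $H_\lambda$ is $\mu^2-(\operatorname{tr}H_\lambda)\mu+1$, so its eigenstructure is governed by the trace: if $|\operatorname{tr}H_\lambda|>2$ we get distinct real reciprocals $\mu,\mu^{-1}$ (Case 5); if $|\operatorname{tr}H_\lambda|<2$ we get a conjugate pair $e^{\pm i\theta}$ with $0<\theta<\pi$ (Case 6); if $\operatorname{tr}H_\lambda=2$ the eigenvalue $1$ is doubled, splitting according to geometric multiplicity into $H_\lambda=I$ (Case 1) or a single Jordan block (Case 3); and if $\operatorname{tr}H_\lambda=-2$ the eigenvalue $-1$ is doubled, splitting into $H_\lambda=-I$ (Case 2) or a Jordan block (Case 4). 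This exhausts $SL_2(\mathbb{R})$. It remains to read off the consequences for $T_P$ through the correspondences: eigenvectors of $T_P$ in $L_P$, $S_P$, $A_P$ match fixed vectors of $H_\lambda^2$, $+1$-eigenvectors of $H_\lambda$, and $-1$-eigenvectors of $H_\lambda$ respectively, and these matchings are linear isomorphisms (the data $(r_1,r_2)$ generates the whole radii vector by (\ref{Rec})). In Cases 5 and 6, the eigenvalues of $H_\lambda^2$ are $\mu^{\pm2}$ resp. $e^{\pm2i\theta}$, none equal to $1$ in the stated ranges, so $H_\lambda^2$ fixes only $\vec 0$ and $\lambda\notin\operatorname{spec}(T_P)$. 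In Cases 1 and 2 the whole plane is the $\pm1$-eigenspace of $H_\lambda$, giving a two-dimensional space of symmetric (resp. anti-symmetric) eigenvectors, so $\lambda$ is a double eigenvalue of $T_P$ of the asserted type.

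The only delicate point is Cases 3 and 4, where $H_\lambda=\pm I+N$ with $N$ nilpotent and nonzero. Here one must check that squaring does not enlarge the fixed space: since $N^2=0$ we get $H_\lambda^2=I\pm2N$, which is again a nontrivial unipotent block, so its fixed space is exactly $\ker N$ --- the same line as the $\pm1$-eigenspace of $H_\lambda$. Thus there is precisely one eigenvector, symmetric in Case 3 and anti-symmetric in Case 4, and $\lambda$ is a simple eigenvalue of $T_P$. I expect this verification that $H_\lambda^2$ inherits the single Jordan block to be the main (if modest) obstacle, everything else being bookkeeping on the trace together with the three correspondences.
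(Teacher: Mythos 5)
Your proof is correct and takes essentially the same route as the paper: the telescoping determinant $\prod\det S_k(\lambda)$ combined with the $n$-periodicity of $\alpha$, followed by a case-by-case analysis of the fixed space of $H_\lambda^2$ via the three correspondences (your nilpotent computation $(\pm I+N)^2=I\pm2N$ is just the paper's Jordan-form argument in different clothing). If anything, you are slightly more careful than the paper in Case 6: the paper asserts $H_\lambda^2\approx R_{2\theta}$ ``has no eigenvalues,'' which fails at $\theta=\pi/2$ (there $H_\lambda^2=-I$), whereas your check that $e^{\pm 2i\theta}\neq 1$ is exactly the fact needed.
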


\begin{proof}
The determinant follows directly from%
\[
\det S_{k}\left(  \lambda\right)  =\frac{\alpha_{k-1}}{\alpha_{k}}%
\]
and the symmetry of our ball ($\alpha_{n+1}=\alpha_{1}$). The existence of the
eigenvectors in cases $1$-$4$ follows from the observation before the
proposition -- we just have to make sure there are no \textbf{other}
eigenvectors of $H_{\lambda}^{2}$ in cases $3$-$6$.\newline In cases $3$ and
$4$, we have%
\[
H_{\lambda}\approx\left[
\begin{array}
[c]{cc}%
\pm1 & 1\\
0 & \pm1
\end{array}
\right]  \Rightarrow H_{\lambda}^{2}\approx\left[
\begin{array}
[c]{cc}%
1 & \pm2\\
0 & 1
\end{array}
\right]
\]
which clearly has only one eigenvalue $1$. In case $5$ the eigenvalues of
$H_{\lambda}^{2}$ are $\mu^{2}$ and $\mu^{-2}$, and neither is $1$. Finally,
in case $6$:%
\[
H_{\lambda}\approx R_{\theta}\Rightarrow H_{\lambda}^{2}\approx R_{2\theta}%
\]
where $R_{\theta}$ is a rotation of angle $\theta$ in the plane. Since
$\theta$ is not a multiple of $\pi$, $H_{\lambda}^{2}$ has no eigenvalues.
\qed \end{proof}

Now we go back to the structure of the eigenvalues of the double evolute
$T_{P}$.

\subsection{Eigenvalue $0$}

We already know from Eq. \ref{NN} that%
\[
\left\langle T_{P}r,r\right\rangle =\sum_{i=1}^{2n}\beta_{i}\left(
r_{i}-r_{i+1}\right)  ^{2}%
\]
so $T_{P}\vec{r}=0$ if and only if $\vec{r}=\gamma\mathbf{1.}$ In other words,
$Ker\left(  T_{P}\right)  =B_{P}$, and $0$ is always a single eigenvalue of
$T_{P}$.

\subsection{Double eigenvalue $1$}

Surprisingly, we can state very explicitly a pair of eigenvectors associated
to the $1$ eigenvalue of $T_{P}$ in the general case:

\begin{proposition}
Let $X$ be any non-zero fixed vector in $\mathbb{R}^{2}$. Then the vector $u$
defined by%
\[
u_{i}=\left[  X,Q_{i}\right]  \ \ \ \ \ \ \left(  i=1,2,...,2n\right)
\]
satisfies $T_{P}u=u$. Moreover, the $P$-polygon determined by $u$ does not
close, that is, $\vec{u}\notin C_{P}$.
\end{proposition}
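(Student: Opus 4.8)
The plan is to verify the eigenvalue equation by direct substitution, and then to detect the failure of closure through a pairing trick that turns a vector identity into a sum of squares.

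First I would establish $T_{P}u=u$ by plugging $u_{i}=\left[X,Q_{i}\right]$ into the operator form of the eigenvalue equation (\ref{EqOper}) with $\lambda=1$, i.e. by checking that $-\beta_{i}\nabla_{i}\left(\alpha_{i}\Delta_{i}u\right)=u_{i}$. This is a short telescoping computation driven entirely by the duality relations (\ref{PQ}). Since the bracket is linear, $\Delta_{i}u=\left[X,\Delta_{i}Q\right]$; then $P_{i+1}=-\alpha_{i}\Delta_{i}Q$ gives $\alpha_{i}\Delta_{i}u=-\left[X,P_{i+1}\right]$; applying $\nabla_{i}$ collapses the consecutive $P$'s into $\nabla_{i}\left(\alpha_{i}\Delta_{i}u\right)=-\left[X,\Delta_{i}P\right]$; and multiplying by $-\beta_{i}$ together with $Q_{i}=\beta_{i}\Delta_{i}P$ returns exactly $\left[X,Q_{i}\right]=u_{i}$. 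So the first claim is a one-line chain of substitutions.

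For the second claim I need to show that the closure defect
\[
V=\sum_{i=1}^{2n}u_{i}\Delta_{i}P=\sum_{i=1}^{2n}\left[X,Q_{i}\right]\Delta_{i}P
\]
is nonzero, since $V=\vec{0}$ is precisely the closure condition (\ref{ClosedC}). The key idea is to pair $V$ with $X$ itself via the determinant bracket: using $\Delta_{i}P=Q_{i}/\beta_{i}$ from (\ref{PQ}) I would compute
\[
\left[X,V\right]=\sum_{i=1}^{2n}\left[X,Q_{i}\right]\left[X,\Delta_{i}P\right]=\sum_{i=1}^{2n}\frac{1}{\beta_{i}}\left[X,Q_{i}\right]^{2}\text{.}
\]
Because every $\beta_{i}>0$ (from (\ref{SignalQ})) and the vectors $Q_{i}$ span the plane, at least one summand is strictly positive for $X\neq\vec{0}$, so $\left[X,V\right]>0$. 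This forces $V\neq\vec{0}$, and hence $\vec{u}\notin C_{P}$.

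The two computations are short, so there is no serious obstacle. If I had to flag a crux, it is the pairing step in the closure argument: taking the determinant of $V$ against the \emph{same} vector $X$ is what converts the vector equation $V=\vec{0}$ into a manifestly nonnegative scalar sum of squares, making strict positivity transparent. Everything else is bookkeeping with the dual-ball identities (\ref{PQ}) and the positivity of the $\beta_{i}$.
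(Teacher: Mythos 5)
Your proposal is correct and follows essentially the same route as the paper: the eigenvalue equation is verified by the same chain of substitutions through the duality relations (\ref{PQ}), and the non-closure is shown by pairing the drift vector with $X$ to obtain the positive sum $\sum_{i}\left[X,Q_{i}\right]^{2}/\beta_{i}$, exactly as in the paper. No gaps; your remark that the $Q_{i}$ span the plane even makes the strict positivity slightly more explicit than the paper's wording.
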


\begin{proof}
Using our previous notation and remembering \ref{PQ}, we calculate directly%
\begin{align*}
s_{i}  &  =\alpha_{i}\Delta_{i}r=\alpha_{i}\left[  X,\Delta_{i}Q\right]
=-\left[  X,P_{i+1}\right] \\
t_{i}  &  =-\beta_{i}\nabla_{i}s=\beta_{i}\left[  X,\Delta_{i}P\right]
=\left[  X,Q_{i}\right]
\end{align*}
Now the "drift" of the $P$-polygon after one turn (see Eqs. \ref{ClosedC} and
\ref{PQ}) is%
\[
D=\sum_{i=1}^{2n}u_{i}\Delta_{i}P=\sum_{i=1}^{2n}\frac{\left[  X,Q_{i}\right]
}{\beta_{i}}Q_{i}\text{.}%
\]
So the component of the drift in the direction orthogonal to $X$ is%
\[
\left[  X,D\right]  =\sum_{i=1}^{2n}\frac{\left[  X,Q_{i}\right]  ^{2}}%
{\beta_{i}}%
\]
which is clearly positive, since $\beta_{i}>0$ for all $i$ and $X\neq\vec{0}$.
In other words, the "drift" cannot be $\vec{0}$, and the $P$-polygon does not close.
\qed \end{proof}

Since there are $2$ degrees of freedom in the choice of $X$, that gives us a
$2$ dimensional space of eigenvectors. More explicitly, one can define vectors
$u^{\prime}$ and $u^{\prime\prime}$ by taking%
\begin{align*}
u_{i}^{\prime}  &  =\left[  Q_{1},Q_{i}\right] \\
u_{i}^{\prime\prime}  &  =\left[  Q_{2},Q_{i}\right]
\end{align*}
which are clearly linearly independent: $u^{\prime}=\left(  0,\left[
Q_{1},Q_{2}\right]  ,...\right)  $ and $u^{\prime\prime}=\left(  \left[
Q_{2},Q_{1}\right]  ,0,...\right)  $. Also, from the symmetry of $Q$, we see
that such solutions satisfy $u_{i+n}=-u_{i}$, so we have%
\[
Ker\left(  T_{P}-I\right)  \subseteq A_{P}%
\]

Actually, consider the following general discrete Sturm-Liouville equation
presented in \cite{WangShi}:%
\begin{equation}
\nabla_{i}\left(  a_{i}\Delta_{i}u\right)  +\left(  \lambda b_{i}%
-c_{i}\right)  u_{i}=0 \label{DSL}%
\end{equation}
where $a_{i},b_{i}>0$ and $c_{i}$ are given $n$-periodic sequences and we want
to find the eigenvalue $\lambda$ and the $2n$-periodic sequence $u$. A class
of such equations can be interpreted as cycloid problems:

\begin{proposition}
Suppose the problem described above has $c_{i}=0$ and a double eigenvalue $1$.
Then there is a symmetric, locally convex ball $P$ such that%
\[
a_{i}=\alpha_{i}=\frac{1}{\left[  Q_{i},Q_{i+1}\right]  }\text{ and }%
b_{i}=\frac{1}{\beta_{i}}=\left[  P_{i},P_{i+1}\right]
\]
so Eq. \ref{DSL} becomes the cycloid problem in Eq. \ref{EqOper}.
\end{proposition}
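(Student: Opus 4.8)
The plan is to run our construction in reverse: I would assemble the two prescribed eigenfunctions at $\lambda=1$ into the vertices of the dual ball $Q$, and then recover $P$ as the dual of $Q$. The motivation is the earlier proposition showing that every eigenvector of $T_P$ for the eigenvalue $1$ has the form $u_i=[X,Q_i]$; taking $X=e_1,e_2$ shows that the coordinates of $Q_i$ \emph{are} (up to sign) a basis of the eigenspace. So, concretely, let $u^{(1)},u^{(2)}$ be a basis of the two-dimensional eigenspace of Eq. \ref{DSL} for $c_i=0$, $\lambda=1$, and set $Q_i=(-u_i^{(2)},u_i^{(1)})\in\mathbb{R}^2$. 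Since each coordinate obeys the same scalar recurrence, the vector sequence satisfies
\[
\nabla_i\left(a_i\Delta_iQ\right)+b_iQ_i=0,\qquad\text{equivalently}\qquad a_i\Delta_iQ-a_{i-1}\Delta_{i-1}Q+b_iQ_i=0.
\]

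First I would show that the discrete Casoratian $G_i:=a_i[Q_i,Q_{i+1}]$ is constant. Writing $[Q_i,Q_{i+1}]=[Q_i,\Delta_iQ]$ and substituting $a_i\Delta_iQ=a_{i-1}\Delta_{i-1}Q-b_iQ_i$ from the identity above, the term $b_iQ_i$ drops out (it is parallel to $Q_i$), leaving $a_i[Q_i,Q_{i+1}]=a_{i-1}[Q_{i-1},Q_i]$; hence $G_i=G$ is independent of $i$. Because $u^{(1)},u^{(2)}$ are linearly independent, $G\neq0$, and after rescaling the basis by a suitable element of $SL_2$ (swapping the two functions if necessary) I may take $G=1>0$. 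Then $[Q_i,Q_{i+1}]=1/a_i>0$ for all $i$, so $Q$ is locally convex, and comparison with Eq. \ref{AlphaandBeta} gives $\alpha_i=a_i$ exactly.

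Next I would recover $P$ and observe that the remaining identities are automatic. Defining $P_{i+1}:=-a_i\Delta_iQ$, the vector recurrence rearranges to $\Delta_iP=P_{i+1}-P_i=b_iQ_i$; comparing with the dual relation $Q_i=\beta_i\Delta_iP$ of Eq. \ref{PQ} yields $\beta_i=1/b_i$, i.e. $b_i=[P_i,P_{i+1}]$, as required. The two defining relations of the dual also come for free: $[\Delta_iP,Q_i]=b_i[Q_i,Q_i]=0$, while $[P_i,Q_i]=a_{i-1}[Q_{i-1},Q_i]=G=1$. Thus $(P,Q)$ is a genuine dual pair realizing the prescribed $a_i$ and $b_i$, and Eq. \ref{DSL} with $c_i=0$, $\lambda=1$ becomes precisely the statement $T_Pu=u$ of Eq. \ref{EqOper}.

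The point that genuinely needs care -- the main obstacle -- is \emph{central symmetry} $Q_{i+n}=-Q_i$ (equivalently $P_{i+n}=-P_i$), since everything above used only local convexity. Here I would pass to the $n$-step monodromy $\mathcal{M}=H_1$ of the recurrence (\ref{Rec}): a two-dimensional space of $2n$-periodic solutions forces $\mathcal{M}^2=I$ on all of $\mathbb{R}^2$, and since $\det\mathcal{M}=\prod_k(a_{k-1}/a_k)=1$ by $n$-periodicity of $a$, the matrix $\mathcal{M}$ is diagonalizable with eigenvalues in $\{\pm1\}$ and determinant $1$, so $\mathcal{M}=I$ or $\mathcal{M}=-I$ -- exactly Cases $1$ and $2$ of Proposition \ref{Cases}. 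The choice $\mathcal{M}=I$ would make every eigenfunction $n$-periodic, so $Q$ and $P$ would merely repeat with period $n$ and could not satisfy $Q_{i+n}=-Q_i$; hence a symmetric realization forces $\mathcal{M}=-I$, the anti-symmetric double eigenvalue (Case $2$), giving $u^{(j)}_{i+n}=-u^{(j)}_i$ and therefore $Q_{i+n}=-Q_i$. I expect this dichotomy to be the crux: the construction itself is routine linear algebra, but it is the identification of the hypothesis ``double eigenvalue $1$'' with the branch $\mathcal{M}=-I$ that guarantees the resulting ball is symmetric rather than a non-symmetric $n$-gon.
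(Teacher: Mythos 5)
Your construction of $(P,Q)$ is essentially the paper's own proof: the paper likewise assembles two independent eigenfunctions $x,y$ at $\lambda=1$ into points $Q_i=(x_i,y_i)$, defines $P_i=-\nabla_iQ/[Q_{i-1},Q_i]$, and rescales $Q$ so that $a_i=\alpha_i$ and $b_i=1/\beta_i$. Your explicit lemma that the Casoratian $G_i=a_i[Q_i,Q_{i+1}]$ is constant is a small but genuine improvement in rigor: the paper only derives the pointwise relation (\ref{Coeff}) and uses the constancy of $a_{i-1}[Q_{i-1},Q_i]$ silently, both to justify a single global rescaling of $Q$ and to argue that $[Q_i,Q_{i+1}]$ can never vanish.

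The divergence is the symmetry step, and your instinct that it is the crux is correct --- indeed the paper is weakest exactly there: it disposes of symmetry with the sentence ``since $a_i$ and $b_i$ are positive and $n$-periodic, $P$ and $Q$ are locally convex and symmetric'', which proves local convexity but not symmetry. As you observe, the hypotheses only yield $\mathcal{M}=\pm I$, and your argument does not exclude $\mathcal{M}=I$; but this is not a defect you could have repaired, because that branch genuinely occurs and on it the conclusion fails. Concretely, take $Q$ to be an equilateral triangle about the origin, $P$ its dual, and $n=3$, so that $a_i=\alpha_i$ and $b_i=1/\beta_i$ are positive constants (hence $3$-periodic). The paper's eigenvalue-$1$ proposition uses only the relations (\ref{PQ}), never symmetry, so $u_i=[X,Q_i]$ solves the recurrence at $\lambda=1$; as $X$ ranges over $\mathbb{R}^2$ these fill the entire $2$-dimensional solution space, so $\lambda=1$ is a double eigenvalue of the $6$-periodic problem --- yet every eigenfunction is $3$-periodic. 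No symmetric hexagon can realize these coefficients: it would produce eigenfunctions $[X,Q_i']$ with $u_{i+3}=-u_i$, and a nonzero sequence cannot be simultaneously $3$-periodic and $3$-anti-periodic. So your concluding move --- reading ``double eigenvalue $1$'' as the branch $\mathcal{M}=-I$, i.e.\ adding the hypothesis that the eigenfunctions are anti-periodic rather than $n$-periodic --- is precisely the correction the proposition needs; with that reading your proof is complete, and it is more careful than the one in the paper.
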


\begin{proof}
Let $u=x$ and $u=y$ be two linearly independent solutions of \ref{DSL}
associated to the eigenvalue $1$. Take the sequence of points $Q_{i}=\left(
x_{i},y_{i}\right)  $ in the plane. Looking at one coordinate at a time, it is
easy to see that%
\[
\nabla_{i}\left(  a_{i}\Delta_{i}Q\right)  =-b_{i}Q_{i}\Rightarrow\nabla
_{i}a\cdot\Delta_{i}Q-a_{i-1}\cdot\delta_{i}^{2}Q=-b_{i}Q_{i}%
\]
so, remembering that%
\[
\left[  \delta_{i}^{2}Q,\Delta_{i}Q\right]  =\left[  \Delta_{i}Q-\nabla
_{i}Q,\Delta_{i}Q\right]  =\left[  \Delta_{i}Q,\nabla_{i}Q\right]
\]
we can write%
\begin{equation}
a_{i-1}\left[  \delta_{i}^{2}Q,\Delta_{i}Q\right]  =b_{i}\left[  Q_{i}%
,\Delta_{i}Q\right]  \Rightarrow a_{i-1}\left[  \Delta_{i}Q,\nabla
_{i}Q\right]  =b_{i}\left[  Q_{i},Q_{i+1}\right]  \label{Coeff}%
\end{equation}
Note that, if we had $\left[  \Delta_{i}Q,\nabla_{i}Q\right]  =0$ above, that
would force $\left[  Q_{i},Q_{i+1}\right]  =0$, which is not possible since
$x$ and $y$ are linearly independent. Now, aiming towards Eq. \ref{PQ}, we
define%
\[
P_{i}=-\frac{\nabla_{i}Q}{\left[  Q_{i-1},Q_{i}\right]  }%
\]
so we can compute%
\[
\left[  \nabla_{i}Q,\Delta_{i}Q\right]  =\left[  P_{i},P_{i+1}\right]
\cdot\left[  Q_{i-1},Q_{i}\right]  \cdot\left[  Q_{i},Q_{i+1}\right]
\]
That allows us to rewrite Eq. \ref{Coeff} as%
\[
a_{i-1}\left[  Q_{i-1},Q_{i}\right]  =\frac{b_{i}}{\left[  P_{i}%
,P_{i+1}\right]  }\text{,}%
\]
so, rescaling $Q$ if necessary, we may assume $a_{i-1}=\alpha_{i-1}=\frac
{1}{\left[  Q_{i-1},Q_{i}\right]  }$ and $b_{i}=\frac{1}{\beta_{i}}=\left[
P_{i},P_{i+1}\right]  $, as claimed. Since the sequences $a_{i}$ and $b_{i}$
are positive and $n$-periodic, we can now see that $P$ and $Q$ are locally
convex and symmetric.
\qed \end{proof}

\subsection{No triple eigenvalues}

\begin{proposition}
If $\lambda$ is any eigenvalue, then%
\[
\dim\left(  Ker\left(  T_{P}-\lambda I\right)  \right)  \leq2
\]

\end{proposition}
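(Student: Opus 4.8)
The plan is to exploit the equivalence between the eigenvalue problem for $T_P$ and the recurrence (\ref{Rec}) set up just before Proposition \ref{Cases}. The key observation is that any eigenvector $\vec{r}\in L_P$ of $T_P$ with eigenvalue $\lambda$ must satisfy (\ref{Rec}), so the entire $2n$-periodic sequence $(r_i)$ is completely determined by its first two entries $\vec{\rho}=(r_1,r_2)\in\mathbb{R}^2$.

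First I would introduce the restriction map $\Phi\colon Ker(T_P-\lambda I)\to\mathbb{R}^2$, $\Phi(\vec{r})=(r_1,r_2)$, which is obviously linear. The main point is that $\Phi$ is injective: if $\Phi(\vec{r})=(0,0)$, then running the recurrence (\ref{Rec}) forward forces every $r_i=0$, so $\vec{r}=\vec{0}$. Injectivity of a linear map into $\mathbb{R}^2$ immediately yields $\dim Ker(T_P-\lambda I)\le 2$, which is exactly the claim.

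To see this more sharply, I would identify the image of $\Phi$. Since $\vec{r}\in L_P$ means precisely that $r_{2n+1}=r_1$ and $r_{2n+2}=r_2$, and since $H_\lambda^2$ carries $(r_1,r_2)$ to $(r_{2n+1},r_{2n+2})$, the periodicity constraint is exactly $H_\lambda^2\vec{\rho}=\vec{\rho}$. Thus $\Phi$ is an isomorphism onto $Ker(H_\lambda^2-I)\subseteq\mathbb{R}^2$, the eigenspace for eigenvalue $1$ of a $2\times 2$ matrix, which again has dimension at most $2$. In fact the bound can simply be read off from the six-case analysis in Proposition \ref{Cases}, where the eigenspace dimensions are $2,2,1,1,0,0$ respectively.

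There is essentially no obstacle to overcome here, as all the substantive work was done in constructing $H_\lambda$ and proving $\det H_\lambda=1$. The only step deserving a moment's care is the injectivity of $\Phi$ --- equivalently, that two consecutive radii determine the whole sequence --- which follows because each transition matrix $S_k(\lambda)$ in (\ref{RecM}) is invertible, its determinant being $\alpha_{k-1}/\alpha_k\ne 0$.
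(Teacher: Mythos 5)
Your proof is correct and follows essentially the same route as the paper: the eigenvector equation $T_P\vec{r}=\lambda\vec{r}$ is the recurrence (\ref{Rec}), so every eigenvector is determined by $(r_1,r_2)$, giving an injection of $Ker(T_P-\lambda I)$ into $\mathbb{R}^2$. Your added details (the restriction map $\Phi$, its image being $Ker(H_\lambda^2-I)$, and invertibility of each $S_k(\lambda)$) just make explicit what the paper states in one line.
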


\begin{proof}
This is a direct consequence of the recurrence -- given $\lambda$, once the
values $r_{1}$ and $r_{2}$ are chosen, recurrence \ref{Rec} determines all
other coordinates of $\vec{r}$. Or, in other words, $H_{\lambda}^{2}$ has at
most $2$ eigenvectors associated to the eigenvalue $1$.
\qed \end{proof}

\subsection{No eigenvalues with eigenvectors in both $S_{P}$ and $A_{P}$}

\begin{proposition}
The restrictions $T_{P}|_{A_{P}}$ and $T_{P}|_{S_{P}}$ have no common eigenvalues.
\end{proposition}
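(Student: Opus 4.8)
The plan is to reduce the statement to the single fact $\det H_\lambda = 1$ established in Proposition~\ref{Cases}, using the dictionary between eigenvectors of $T_P$ and eigenvectors of $H_\lambda$ laid out just before that proposition. Recall that the correspondence is an honest \emph{if and only if}: a nonzero eigenvector of $T_P$ with eigenvalue $\lambda$ lies in $S_P$ exactly when it comes from a $\vec{\rho}\in\mathbb{R}^2$ with $H_\lambda\vec{\rho}=\vec{\rho}$, and it lies in $A_P$ exactly when it comes from a $\vec{\rho}\in\mathbb{R}^2$ with $H_\lambda\vec{\rho}=-\vec{\rho}$.

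First I would argue by contradiction, supposing that some $\lambda$ is an eigenvalue of both $T_P|_{S_P}$ and $T_P|_{A_P}$. Feeding the symmetric eigenvector through the correspondence produces a vector on which $H_\lambda$ acts as $+1$, so $+1$ is an eigenvalue of $H_\lambda$; feeding the antisymmetric eigenvector through the correspondence produces a vector on which $H_\lambda$ acts as $-1$, so $-1$ is also an eigenvalue of $H_\lambda$.

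Next I would invoke the determinant. Since $H_\lambda$ is a $2\times 2$ matrix, its (at most two) eigenvalues multiply to $\det H_\lambda$. Having both $+1$ and $-1$ among them pins down the two eigenvalues completely, forcing $\det H_\lambda=(+1)(-1)=-1$. This contradicts $\det H_\lambda=1$ from Proposition~\ref{Cases}, so no such common $\lambda$ can exist.

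I do not anticipate a genuine obstacle, as every ingredient is already available: the $S_P$/$A_P$ versus $\pm 1$ dictionary, and the determinant computation. The argument is essentially a parity check on the product of eigenvalues. The one point requiring care is to use the correspondence in both directions, so that a truly \emph{common} eigenvalue really does force $H_\lambda$ to carry the eigenvalues $+1$ and $-1$ simultaneously; once that is secured, the determinant immediately rules it out.
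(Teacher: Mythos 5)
Your proposal is correct and is essentially identical to the paper's own proof: both reduce a hypothetical common eigenvalue to $H_{\lambda}$ having $+1$ and $-1$ as eigenvalues simultaneously, which contradicts $\det H_{\lambda}=1$ since a $2\times 2$ matrix's eigenvalues multiply to its determinant.
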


\begin{proof}
If a eigenvalue $\lambda$ had eigenvectors in both $A_{P}$ and $S_{P}$, the
corresponding half-way transform $H_{\lambda}$ would have both $1$ and $-1$ as
eigenvalues. Since $\det H_{\lambda}=1$, this is impossible.
\qed \end{proof}

Putting all pieces together, we have the main result of this section:

\subsection{The spectral structure of the double evolute}

\begin{proposition}
The eigenvalues of $T_{P}$ (in Eq. \ref{T}) can be ordered as%
\begin{equation}
\lambda_{0}\left(  =0\right)  <\lambda_{1}^{1}=\lambda_{1}^{2}\left(
=1\right)  <\lambda_{2}^{1}\leq\lambda_{2}^{2}<\lambda_{3}^{1}\leq\lambda
_{3}^{2}<...\lambda_{n-1}^{1}\leq\lambda_{n-1}^{2}<\lambda_{n} \label{SpecL}%
\end{equation}
where $\lambda_{k}^{1,2}$ are eigenvalues of $T_{P}|_{A_{P}}$ if $k$ is odd
and $0$ and $\lambda_{k}^{1,2}$ are eigenvalues of $T_{P}|_{S_{P}}$ if $k$ is even.
\end{proposition}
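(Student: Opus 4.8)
The plan is to reduce the whole spectral problem to two independent $n$-dimensional problems and then transport the ordering from the regular polygon by a continuity argument. First I would observe that the subspaces $S_P$ and $A_P$ are \emph{fixed} inside $\mathbb{R}^{2n}$ -- they depend only on the periodicity relations $r_{i+n}=\pm r_i$, not on the shape of $P$ -- and that $T_P$ preserves each of them, since the coefficient sequences $\alpha_i$ and $\beta_i$ are $n$-periodic. Because $L_P=S_P\oplus A_P$ with $\dim S_P=\dim A_P=n$, the operator splits as $T_P|_{S_P}\oplus T_P|_{A_P}$, each summand being self-adjoint with exactly $n$ real eigenvalues counted with multiplicity. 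The earlier results already pin down, for \emph{every} admissible $P$, the key rigid features: $0$ is the simple minimal eigenvalue and lies in $S_P$ (as $Ker\,T_P=B_P\subseteq S_P$); $1$ is a double eigenvalue lying in $A_P$; no eigenvalue has multiplicity three; and $T_P|_{S_P}$, $T_P|_{A_P}$ never share an eigenvalue.

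Next I would set up a continuous deformation of $P$ into a regular $2n$-gon. Parametrizing a symmetric convex polygon by its edge directions $\theta_1<\dots<\theta_n<\theta_1+\pi$ (extended by $\theta_{i+n}=\theta_i+\pi$) together with positive edge lengths $\ell_1,\dots,\ell_n>0$, one sees that the space of admissible balls is path-connected and contains the regular polygon: closure and central symmetry are automatic from these relations, and convexity is exactly the ordering of the angles. Along such a path the entries of $T_P$, and hence its eigenvalues, vary continuously, so I can track the sorted symmetric eigenvalues $\sigma_1(t)\le\dots\le\sigma_n(t)$ and the sorted antisymmetric eigenvalues $\alpha_1(t)\le\dots\le\alpha_n(t)$ as continuous functions of the deformation parameter $t$.

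The core of the argument is then a single observation: since $T_P|_{S_P}$ and $T_P|_{A_P}$ share no eigenvalue for \emph{any} polygon in the family, we have $\sigma_i(t)\ne\alpha_j(t)$ for all $t$ and all $i,j$, so each difference $\sigma_i-\alpha_j$ keeps a constant sign. Consequently the entire way in which the two families interleave is a deformation invariant, and I may simply read it off from the regular case of Example~\ref{ExReg}, where the sorted eigenvalues are $0<1=1<\lambda_2=\lambda_2<\lambda_3=\lambda_3<\dots<\lambda_n$ with the $k$-th block lying in $S_P$ when $k$ is even and in $A_P$ when $k$ is odd. Transporting this pattern to a general $P$ gives the claimed ordering: consecutive blocks carry opposite symmetry type and therefore can never touch, yielding the strict inequalities $<$; the two eigenvalues inside one block share a type and may merge or separate, yielding the relations $\le$; the cap on block size is the ``no triple eigenvalue'' result; and counting the $n$ eigenvalues available in each subspace forces $\lambda_n$ to be simple.

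I expect the main obstacle to be the careful justification of the deformation step: one must check that the configuration space of symmetric, convex, non-degenerate $2n$-gons is genuinely path-connected and that \emph{every} intermediate polygon stays admissible, so that the cited propositions -- especially ``no common eigenvalue'' -- apply all along the path, and one must phrase the continuous tracking of the two eigenvalue families cleanly. A purely algebraic alternative would study the discriminant $\tau(\lambda)=\operatorname{tr} H_\lambda$, a degree-$n$ polynomial for which symmetric eigenvalues solve $\tau(\lambda)=2$ and antisymmetric ones solve $\tau(\lambda)=-2$; but extracting the interlacing directly from $\tau$ requires a discrete Floquet/oscillation analysis that the homotopy argument conveniently sidesteps.
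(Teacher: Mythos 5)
Your proposal is correct and follows essentially the same route as the paper: deform $P$ continuously into the regular $2n$-gon and use the fact that $T_{P}|_{S_{P}}$ and $T_{P}|_{A_{P}}$ never share an eigenvalue to conclude that the interleaving pattern, read off from the regular case of Example \ref{ExReg}, cannot change along the deformation. Your write-up is in fact somewhat more careful than the paper's one-paragraph argument, since tracking the two sorted eigenvalue families inside the fixed invariant subspaces $S_{P}$ and $A_{P}$ sidesteps the paper's delicate claim that eigenvectors can be chosen to vary continuously, and you explicitly address the path-connectedness of the space of admissible balls, which the paper takes for granted.
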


\begin{proof}
Any $P$-ball can be continuously deformed towards a $2n$-regular polygon
(being kept convex and symmetric in the process). Throughout the process, the
eigenvalues of $T_{P}$ change continuously, and its eigenvectors (always in
$A_{P}$ or $S_{P}$) can also be chosen to change continuously. Now, the
proposition above guarantees that eigenvalues corresponding to eigenvectors in
distinct spaces (one in $A_{P}$, another in $S_{P}$) cannot "switch places"
through the process! Therefore, the ordering of the eigenvalues of $T_{P}$
(belonging to different spaces) must be the same as it is in the case of
regular polygons!
\qed \end{proof}

\begin{remark}
The above proposition begs the following question: can one hear the shape of a
convex symmetric body\footnote{Ok, there is no physical \emph{hearing} in this
context, but we wanted to cite \cite{Kac1966}.}? We mean, given a specific
list of cycloid eigenvalues, are we able to determine the shape of the
$P$-ball? Or, a slight variation on this question: if all eigenvalues are
double, do we necessarily have a regular polygon?
\end{remark}

\begin{remark}
Note that Eq. \ref{RandH} can be rewritten in terms of the double evolute
transform!%
\[
\vec{r}=\left(  I-T_{P}\right)  \vec{h}%
\]
So, let $\vec{r}$ be the radii associated to a \textbf{closed} cycloid, say,
$T_{P}\vec{r}=\lambda\vec{r}$ with $\lambda\neq1$. Taking $\vec{h}=\frac
{1}{1-\lambda}\vec{r}$, we define a $P$-polygon whose radii vector is exactly
$\vec{r}$, since%
\[
\left(  I-T_{P}\right)  \vec{h}=\frac{1}{1-\lambda}\vec{r}-\frac{\lambda
}{1-\lambda}\vec{r}=\vec{r}\text{.}%
\]
This shows that the support function of a (correctly placed in the plane)
closed cycloid is also an eigenvector of $T_{P}$. In other words, any closed
$P$-polygon can be written as the Minkowski sum of $2n-2$ closed cycloids!
Now, periodic support functions cannot represent open polygons like our open
cycloids, hence our choice or primarily working with curvature radii.
\end{remark}

\subsection{Cusps}

\begin{definition}
Given a periodic $P$-polygon $M$ represented by the radius vector $\vec{r}$,
the \emph{orientation} of its side $\Delta_{i}M$ is the sign of the
corresponding radius $r_{i}$. A vertex $V$ of $M$ is a \emph{cusp} if its
neighbor (non-degenerate) sides have opposite orientations. Such cusp will be
named \emph{ordinary} if there is at most one degenerate side at $V$.
\end{definition}

Such cusps have appeared at \cite{Schneider} (where they were called
\emph{strong corners)}. Geometrically, sides which meet at cusp are on
opposite sides of the normal line $\left\{  M_{i}+tP_{i};t\in\mathbb{R}%
\right\}  $; algebraically, each cusp corresponds to a zero-\textbf{crossing}
of the sequence $\left(  r_{i}\right)  _{i\in\mathbb{Z}}$. For example, a
snippet $\left(  ...,-2,0,0,0,0,3,...\right)  $ corresponds to \textbf{one}
non-ordinary cusp, while $\left(  ...,2,0,2,...\right)  $ is not a cusp at all.

Now, consider how the number of zero-crossings (per period) of a sequence
$\vec{r}$ can change if $\vec{r}$ is changed continuously. One can create two
cusps going from a "$+0+$" subsequence to "$+-+$" (or from "$-0-$" to "$-+-$",
of course); one can destroy two cusps reversing this process. Finally, many
zero-crossings can be created at once if a sequence of consecutive $0$s is
present (for example, from "$+000+$" to "$+-+-+$"). Outside of these
situations, there is no way to create or destroy a cusp (it is possible to
\emph{move it}, of course, going from "$++-$" to "$+0-$" to "$+--$", but in
each of these cases we have only one ordinary cusp). Under this light, the
following proposition is important:

\begin{proposition}
In a cycloid, any zero entries in the radius vector must correspond to cusps;
also, all cusps are ordinary.
\end{proposition}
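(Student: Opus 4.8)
The plan is to argue entirely from the recurrence in Equation (\ref{Rec}), which is available because a cycloid satisfies $T_{P}\vec{r}=\lambda\vec{r}$ (equivalently Eq. (\ref{EqOper})). Everything hinges on one observation: the coefficient of $r_{k-1}$ in that recurrence is $-\alpha_{k-1}/\alpha_{k}$, which is \emph{strictly negative}, since all $\alpha_{i}>0$ (established after Eq. (\ref{SignalQ})). Setting $r_{k}=0$ in (\ref{Rec}) makes the middle term vanish and collapses it to
\[
r_{k+1}=-\frac{\alpha_{k-1}}{\alpha_{k}}\,r_{k-1}\text{,}
\]
so across any zero entry the two flanking entries $r_{k-1}$ and $r_{k+1}$ differ by a \emph{negative} factor. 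I would extract both claims from this single relation.

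First I would rule out two consecutive zeros. If $r_{k-1}=r_{k}=0$, then reading the displayed identity at index $k$ (or solving (\ref{Rec}) for $r_{k+1}$) forces $r_{k+1}=0$ as well; iterating the recurrence forward and backward then propagates zeros in both directions, giving the trivial cycloid $\vec{r}\equiv\vec{0}$. Hence in any nontrivial cycloid every maximal run of zero entries has length exactly one. This already disposes of the second claim: a cusp occupies the vertex where the nearest non-degenerate sides meet with opposite orientation, and the degenerate sides accumulated there correspond precisely to such a run of zeros in $\vec{r}$; since that run has length at most one, at most one degenerate side is present, so every cusp is ordinary.

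For the first claim I would take any zero entry $r_{k}=0$. By the no-consecutive-zeros fact its neighbours $r_{k-1}$ and $r_{k+1}$ are both nonzero, and the displayed relation gives them opposite signs because the ratio $-\alpha_{k-1}/\alpha_{k}$ is negative. Thus the local pattern is $(+,0,-)$ or $(-,0,+)$ -- a genuine zero-crossing in the sense of the cusp definition -- so the vertex at which the degenerate side collapses is indeed a cusp. The argument is short, and I expect the only real care to lie in the bookkeeping that translates the sequence language (``run of zeros'', ``zero-crossing'') into the geometric language (``degenerate side'', ``cusp'', ``ordinary''), together with flagging that it is the \emph{sign} of the recurrence coefficient, not merely its nonvanishing, that produces the required sign reversal.
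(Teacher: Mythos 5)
Your proposal is correct and is essentially the paper's own argument: the paper also proves this directly from the recurrence (\ref{Rec}), noting that $r_{k}=0$ forces $r_{k-1}$ and $r_{k+1}$ to have opposite signs and that two consecutive zeros would propagate to make $\vec{r}\equiv\vec{0}$. You simply spell out the details the paper leaves implicit (the strict negativity of $-\alpha_{k-1}/\alpha_{k}$ and the translation into the cusp/degenerate-side language), which is fine.
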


\begin{proof}
This is a direct consequence of the recurrence \ref{Rec}: if $v_{k}=0$, then
$v_{k+1}$ and $v_{k-1}$ must have opposite signs; if two consecutive sides
were $0$, all of them would be $0$.
\qed \end{proof}

\begin{proposition}
Given a $P$-ball with $2n$ sides, the number of cusps of its associated
cycloids is respectively%
\[
0,2,2,4,4,6,6,...,2n-2,2n-2,2n
\]

\end{proposition}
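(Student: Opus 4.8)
The plan is to convert the statement into a count of \emph{sign changes} of the eigenvectors, and then to carry this count from the regular polygon (where it can be read off directly) to an arbitrary ball by the same deformation idea already used for the spectral ordering. First I would record the reduction: by the two preceding propositions a nontrivial cycloid never has two consecutive zero radii and never has a zero entry whose two neighbours share a sign, so every zero of the $2n$-periodic sequence $(r_i)$ is a genuine zero-crossing and an ordinary cusp. Hence for a cycloid the number of cusps equals the number of sign changes of $(r_i)$ per period, and it suffices to show that the eigenvector attached to the index $k$ in the ordering (\ref{SpecL}) has exactly $2k$ sign changes for $k=0,1,\dots,n$; reading this off (\ref{SpecL}) then yields the list $0;\,2,2;\,4,4;\,\dots;\,2n-2,2n-2;\,2n$.

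For the regular case (Example~\ref{ExReg}) the eigenvalue $\lambda_k=\sin^2(k\gamma)/\sin^2\gamma$ carries the cosine eigenvector $\vec c_k=(\cos 2jk\gamma)_{j}$, and the same identity used there, with sine in place of cosine, shows that $(\sin 2jk\gamma)_{j}$ is a second eigenvector for the same $\lambda_k$. Every nonzero combination is therefore of the form $\big(\cos(2jk\gamma-\phi)\big)_{j}$, a sampled cosine of frequency $k$: as $j$ runs over one period the phase sweeps exactly $k$ full turns, giving $2k$ sign changes regardless of $\phi$. The extreme indices are included, $k=0$ being the constant vector ($0$ changes) and $k=n$ the alternating vector $\big((-1)^j\big)_j$ ($2n$ changes), so in the regular case the cycloids of index $k$ have precisely $2k$ cusps, and moreover \emph{every} nonzero vector of the index-$k$ eigenspace does.

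Next I would deform: join $P$ to the regular $2n$-gon through a path $P_t$ of convex symmetric balls, along which the matrices $T_{P_t}$ vary continuously and, by the spectral-structure proposition, the pair $\{\lambda_k^1,\lambda_k^2\}$ of index $k$ stays strictly separated from the pairs of index $k\pm1$ (the $S_P$/$A_P$ alternation forbids eigenvalues of different parity from crossing). Hence the one- or two-dimensional eigenspace $V_k(t)$ of this isolated group varies continuously in $t$ via its spectral projection, even across the isolated parameters where $\lambda_k^1=\lambda_k^2$. On the set of all cycloids the sign-change count is locally constant, since it can only change through a transition of type "$+0+$" $\to$ "$+-+$" or through a block of consecutive zeros, and (by the reduction, applied at every $P_t$) neither ever occurs for a cycloid. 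As the total family $\{\,(t,\vec r): t\in[0,1],\ \vec r\in V_k(t),\ \vec r\neq\vec 0\,\}$ is connected — its fibres $V_k(t)\setminus\{\vec 0\}$ are connected for $1\le k\le n-1$, while for the simple indices $k=0,n$ one tracks a single eigenvector up to sign — the count is constant along it and equals the regular value $2k$ for all $t$, in particular at $P_0=P$.

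The hard part is exactly this continuity across the crossings $\lambda_k^1=\lambda_k^2$, where a two-dimensional eigenspace appears and an individual eigenvector need not depend continuously on $t$. This is why I would track the whole isolated eigenvalue-group and its spectral projection (continuous because the group stays bounded away from the rest of the spectrum) rather than a single eigenvector, and run the invariance argument on the connected total space of index-$k$ cycloids rather than on one continuously chosen vector; the strengthened regular-case computation (every vector of the eigenspace has $2k$ sign changes) is precisely what makes a branch-swap at a crossing harmless. Everything else is bookkeeping.
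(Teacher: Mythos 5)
Your reduction of cusps to sign changes and your strengthened regular-polygon computation (every nonzero vector of the index-$k$ eigenspace is a sampled shifted cosine, hence has exactly $2k$ sign changes) are both correct, and the latter really is needed: the paper only inspects the particular cosine eigenvectors $\vec c_k$, whereas an eigenvector tracked along a deformation can land anywhere in the two-dimensional eigenspace of the regular polygon. The gap is in your final gluing step. The space you prove connected is the punctured bundle of spectral subspaces $\bigl\{(t,\vec r)\,:\,\vec r\in V_k(t)\setminus\{\vec 0\}\bigr\}$, where $V_k(t)$ is the range of the spectral projection attached to the pair $\{\lambda_k^1(t),\lambda_k^2(t)\}$. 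But at parameters where $\lambda_k^1(t)<\lambda_k^2(t)$, most vectors of $V_k(t)$ are combinations $a\,u_1(t)+b\,u_2(t)$ with $ab\neq0$ (here $u_1,u_2$ are unit eigenvectors); these are \emph{not} eigenvectors of $T_{P_t}$, so they do not satisfy the recurrence (\ref{Rec}), and nothing forbids them from containing a pattern "$+0+$" or consecutive zeros, at which the sign-change count fails to be locally constant. A function that is locally constant only at the points of a subset of a connected space need not be constant on that subset (a step function on $[0,1]$, restricted to the subset $\{0,1\}$, already shows this). If instead you shrink the total family to the genuine cycloids, then the fibre over any parameter with $\lambda_k^1(t)<\lambda_k^2(t)$ is the union of two punctured lines --- four rays, disconnected --- so the connectedness you invoke is precisely the assertion at stake ($u_1$ and $u_2$ could a priori have different counts), not something you have established.

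The argument can be closed with your own ingredients, but it must be run on cycloids only. At each $t$ the count is constant on every punctured eigenspace: trivially for a one-dimensional eigenspace, and for a two-dimensional one because the punctured plane is connected and consists entirely of cycloids, so your local-constancy lemma applies at \emph{every} point of it. This yields two integer functions $n_1(t),n_2(t)$ (the counts for the $\lambda_k^1(t)$- and $\lambda_k^2(t)$-cycloids), which agree whenever the two eigenvalues coincide. Each $n_i$ is locally constant in $t$: where $\lambda_k^i(t)$ is simple this follows from continuity of the one-dimensional eigenprojection; at a coincidence parameter $t_*$ it follows by compactness, since unit cycloids at parameters $t_j\to t_*$ can only accumulate on unit cycloids at $t_*$, all of which carry the single count $n(t_*)$, forcing $n_1=n_2=n(t_*)$ on a neighborhood of $t_*$. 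Being locally constant on the connected interval $[0,1]$ and equal to $2k$ at the regular polygon, both functions are identically $2k$, which is the claim. (Alternatively: make the deformation real-analytic in $t$ and select eigenvalue/eigenvector branches analytically; then the one-branch continuity argument --- which is what the paper itself tacitly uses --- becomes legitimate. Your instinct that the paper's step needs justification at crossings is right; it is only your repair that misfires.)
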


\begin{proof}
Once again, deform $P$ continuously towards a $2n-$regular polygon. Since
ordinary cusps are stable with relation to changes in the radii vectors (as
long as no consecutive zeroes occur), the previous proposition guarantees that
each cycloid will have a constant number of cusps as the deformation takes
place. Now it is just a matter of checking how many cusps each of the
eigenvectors in Equation \ref{RegularEV} has.
\qed \end{proof}

\section{Other periods\label{SecPeri}}

In the Euclidean plane, hypocycloids and epicycloids might take several turns
to close. This suggests we could relax the periodicity condition (Eq.
\ref{Period}) on the curvature representation $\left(  r_{i}\right)
_{i\in\mathbb{Z}}$ of the polygonal line $M$ -- requiring, instead, the
sequence to be periodic with period $2mn$, say. Can we find discrete closed
cycloids with other periods this way?

We claim that a big part of the analysis in such cases is already done! After
all, we did not really use that the unit ball $P$ is a \textbf{simple} closed
convex polygon -- we only needed local convexity, as seen in Eqs.
\ref{SignalQ}, so we could establish the positivity of $\alpha_{i}$ and
$\beta_{i}$ (defined in Eq. \ref{AlphaandBeta}). In other words, if one wants
to find $2mn$-periodic cycloids with reference to a $2n$-ball $P$, one can
instead look for $2mn$-periodic cycloids with reference to the $2mn$-ball
which is determined by $P$ \textbf{traversed }$\mathbf{m}$\textbf{\ times}
(call this polygon $mP$).

So the reader will have the pleasure of re-reading this article from the
beginning switching $P$ with $mP$, a polygon which goes $m$ times around the
origin (two articles for the price of one!). All calculations in Section
\ref{SecMink} are unchanged, except that indices go $i=1,2,...,2mn$. The new
curvature radius space of Section \ref{SecCurv} (that would be $L_{mP}$) has
dimension $2mn$, and the corresponding spaces $S_{mP}$ and $A_{mP}$ are still
orthogonal complements of each other. All calculations done in Section
\ref{SecEvol} still hold, but the matrices are $2mn\times2mn$. Finally, all
arguments in Section \ref{SecCycl} still hold, with a few exceptions -- first,
our base case must change\footnote{In fact, all the theory could be done if
$P$ were any locally convex polygon that goes around the origin $m$ times (not
necessarily repeating itself at each turn), but then the geometric
intepretation of $P$ as a unit ball is somewhat diminished. \textbf{Three}
articles for the price of one!}:

\begin{example}
[Regular Polygon traversed $m$ times]If $P$ is a regular polygon with $2n$
sides, traversed $m$ times, then so is $Q$. Write $\alpha=\frac{\pi}{2n}$ and
$\gamma=\frac{\pi}{2mn}$ and assume by rescaling that%
\[
\left\vert P_{i}\right\vert =\left\vert Q_{i}\right\vert =\frac{1}{\sqrt
{\cos\alpha}}\Rightarrow\alpha_{i}=\beta_{i}=\frac{1}{2\sin\alpha}\text{.}%
\]
The double evolute transform is the same as before, except for the matrix size
which now must be $2mn\times2mn$:%
\[
T_{mP}=\frac{1}{4\sin^{2}\alpha}\left[
\begin{array}
[c]{cccccc}%
2 & -1 & 0 & ... & 0 & -1\\
-1 & 2 & -1 & ... & 0 & 0\\
0 & -1 & 2 & ... & 0 & 0\\
\vdots & \vdots & \vdots & \ddots & \vdots & \vdots\\
0 & 0 & 0 & ... & 2 & -1\\
-1 & 0 & 0 & ... & -1 & 2
\end{array}
\right]
\]
The eigenvectors are%
\[
\vec{d}_{k}=\left(  1,\cos2k\gamma,\cos4k\gamma,...,\cos2\left(  2mn-1\right)
k\gamma\right)  ;\ k=0,1,2,...,2mn-1
\]
and the eigenvalues are%
\[
\sigma_{k}=\frac{\sin^{2}k\gamma}{\sin^{2}\alpha}%
\]
for $k=0,1,2,...,2mn-1$. Again, $\sigma_{k}=\sigma_{2mn-k}$, so all of them
are double eigenvalues, except for $\sigma_{0}=0$ and $\sigma_{mn}=\csc
^{2}\alpha$. Renaming $\left(  \sigma_{k},\sigma_{2mn-k}\right)  $ as $\left(
\sigma_{k}^{1},\sigma_{k}^{2}\right)  $ for $k=1,2,...,mn-1$, the eigenvalues
can be ordered this way:%
\[
\sigma_{0}\left(  =0\right)  <\sigma_{1}^{1}=\sigma_{1}^{2}<\sigma_{2}%
^{1}=\sigma_{2}^{2}<...<\sigma_{m}^{1}=\sigma_{m}^{2}(=1)<...<\sigma
_{mn-1}^{1}=\sigma_{mn-1}^{2}<\sigma_{mn}\left(  =\csc^{2}\alpha\right)
\]
Each eigenvector has only regular cusps. In fact, taking $k=0,1,2,...,mn$, we
can see that the number of cusps in $\vec{d}_{k}$ is exactly $2k$. So the
number of cusps in each cycloid can be ordered (correspondingly to the
eigenvalues) in the list%
\[
0,2,2,4,4,...,2mn-2,2mn-2,2mn\text{.}%
\]
Finally, if $k=jm$ ($j=1,2,...,n$) then $k\gamma=j\alpha$, and%
\[
\vec{d}_{jm}=\vec{c}_{j}\text{ and }\sigma_{jm}^{1,2}=\lambda_{j}^{1,2}%
\]
where $\lambda$ and $\vec{c}$ are the eigenvalues/vectors in the case the
polygon was traversed just once (see Example in page \pageref{ExReg}).
\end{example}

So the following (partial!) result is easily obtained as before:

\begin{proposition}
When the unit ball is traversed $m$ times, the eigenvalues of the double
evolute transform can be ordered as%
\[
\sigma_{0}\left(  =0\right)  <\sigma_{1}^{1}\leq\sigma_{1}^{2}<\sigma_{2}%
^{1}\leq\sigma_{2}^{2}<...<\sigma_{m}^{1}=\sigma_{m}^{2}=1<...<\sigma
_{mn-1}^{1}\leq\sigma_{mn-1}^{2}<\sigma_{mn}%
\]
where $\sigma_{k}^{1,2}$ are eigenvalues of $T_{mP}|_{A_{mP}}$ if $k$ is odd
and $0$ and $\sigma_{k}^{1,2}$ are eigenvalues of $T_{mP}|_{S_{mP}}$ if $k$ is
even. The number of cusps of the associated cycloids are respectively%
\[
0,2,2,4,4,...,2mn-2,2mn-2,2mn\text{.}%
\]

\end{proposition}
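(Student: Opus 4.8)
The plan is to mirror, almost verbatim, the proof of the spectral structure proposition for $T_{P}$, now applied to the $m$-times traversed ball $mP$. As stressed in the discussion preceding the statement, every construction in Sections~\ref{SecMink}--\ref{SecCycl} used only \emph{local} convexity of the ball --- precisely the positivity of the $\alpha_{i}$ and $\beta_{i}$ coming from Eq.~\ref{SignalQ} --- and $mP$ is locally convex. Consequently $T_{mP}$ is self-adjoint and non-negative, $A_{mP}=(S_{mP})^{\perp}$, every eigenvalue has geometric multiplicity at most $2$, and, most importantly, no eigenvalue of $T_{mP}$ can possess eigenvectors in both $A_{mP}$ and $S_{mP}$: the half-turn transform $H_{\lambda}$ has $\det H_{\lambda}=1$ (Proposition~\ref{Cases}), so it cannot carry $+1$ and $-1$ as simultaneous eigenvalues. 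These are exactly the ingredients the continuity argument requires.

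First I would pin down the base case using the preceding Example. There $T_{mP}$ is computed explicitly for the regular $2n$-gon traversed $m$ times; its eigenvalues are $\sigma_{k}=\sin^{2}(k\gamma)/\sin^{2}\alpha$ with eigenvectors $\vec{d}_{k}$, and one reads off directly that $\vec{d}_{k}\in S_{mP}$ when $k$ is even and $\vec{d}_{k}\in A_{mP}$ when $k$ is odd, each carrying exactly $2k$ cusps. This already exhibits both the claimed ordering and the claimed cusp list $0,2,2,4,4,\dots,2mn-2,2mn-2,2mn$ for this particular ball. Next I would deform $mP$ continuously into this regular base case through locally convex symmetric polygons winding $m$ times around the origin. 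Along the homotopy the eigenvalues of $T_{mP}$ move continuously, and the eigenvectors --- each living in $A_{mP}$ or $S_{mP}$ --- can be chosen continuously, with their parity locally constant. Since an $A_{mP}$-eigenvalue and an $S_{mP}$-eigenvalue can never coincide, the symmetric and antisymmetric spectra cannot trade places: their interleaving is rigid. Hence the ordering at $mP$ must coincide with the ordering at the regular base case, which is the displayed ordering in the statement. The \emph{strict} inequalities between consecutive blocks are exactly the separations between an $A_{mP}$- and an $S_{mP}$-eigenvalue (hence genuinely strict), whereas the internal $\sigma_{k}^{1}\le\sigma_{k}^{2}$ relate two same-parity eigenvalues and may merge.

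For the cusp count I would invoke the two cycloid cusp propositions, valid verbatim for $mP$: in a cycloid every zero radius is a cusp and all cusps are ordinary, so the number of cusps is stable under deformations that never create consecutive zero radii. Thus each cycloid keeps its cusp number along the homotopy, and it equals the base-case value above. The step needing the most care is the same one as in the single-traversal proposition: justifying that the deformation stays locally convex and that the spectral projections vary continuously enough for parities to be preserved. But because $T_{mP}$ is self-adjoint with eigenvalues of multiplicity at most two and the $A_{mP}$/$S_{mP}$ spectra remain disjoint throughout, this is precisely the situation already resolved for $T_{P}$, so no genuinely new obstacle arises --- which is exactly why the result is ``easily obtained as before''.
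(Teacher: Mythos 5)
Your proposal is correct and follows essentially the same route as the paper: a continuous deformation from the regular $2n$-gon traversed $m$ times (whose spectrum, eigenvector parities, and cusp counts are computed explicitly in the preceding Example), with the ordering preserved because $\det H_{\lambda}=1$ forbids a common eigenvalue of $T_{mP}|_{A_{mP}}$ and $T_{mP}|_{S_{mP}}$, and the cusp counts preserved because all cusps of cycloids are ordinary. The only difference is that you spell out the supporting ingredients (self-adjointness, multiplicity bound, orthogonality of $S_{mP}$ and $A_{mP}$) which the paper leaves implicit in its one-paragraph argument.
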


\begin{proof}
As before, start with a $2n-$regular polygon, traversed $m$ times around the
origin, and deform it con\-ti\-nuous\-ly towards $mP$. Since the eigenvalues
$\sigma_{k}^{1,2}$ cannot switch places with $\sigma_{k+1}^{1,2}$ (that would
imply an eigenvalue common to $A_{mP}$ and $S_{mP}$), the ordering above must
be kept throughout. Similarly, since all cusps are kept ordinary throughout
the deformation, their number must be constant in each eigenvector.
\qed \end{proof}

\subsection{From one turn to many turns}

Our final goal this section is to relate the eigenvalues of $T_{mP}$ with the
eigenvalues of $T_{P}$. To do that, we fully turn our attention to the
half-turn transform, for now we have:%
\[
\left[
\begin{array}
[c]{c}%
r_{2mn+2}\\
r_{2mn+1}%
\end{array}
\right]  =H_{\lambda}^{2m}\left[
\begin{array}
[c]{c}%
r_{1}\\
r_{2}%
\end{array}
\right]
\]
So now we say:

\begin{itemize}
\item "Finding an eigenvector $\vec{r}=\left(  r_{1},r_{2},...,r_{2mn-1}%
\right)  \in L_{mP}$ of $T_{mP}$ (for the eigenvalue $\lambda$)" is equivalent
to\newline"finding an eigenvector $\vec{\rho}=\left(  r_{1},r_{2}\right)
\in\mathbb{R}^{2}$ of $H_{\lambda}^{2m}$ (for the eigenvalue $1$)";
\end{itemize}

Adapting proposition \ref{Cases}, we have:

\begin{proposition}
Let $\lambda\in\mathbb{R}$. We can once again classify $\lambda$ as an
eigenvalue of $T_{mP}$ according to the \textbf{geometric} multiplicity of the
eigenvalues of $H_{\lambda}$:\newline Case $1$. If $H_{\lambda}=I$, then
$\lambda$ is a double eigenvalue of $T_{mP}$, and both its eigenvectors are in
$S_{mP}$.\newline Case $2$. If $H_{\lambda}=-I$, then $\lambda$ is a double
eigenvalue of $T_{mP}$, and both its eigenvectors are in $A_{mP}$.\newline
Case $3$. If $H_{\lambda}$ has a single eigenvalue $1$, then $\lambda$ is a
single eigenvalue of $T_{mP}$, with its eigenvectors in $S_{mP}$.\newline Case
$4$. If $H_{\lambda}$ has a single eigenvalue $-1$, then $\lambda$ is a single
eigenvalue of $T_{mP}$, with its eigenvectors in $A_{mP}$.\newline Case $5$.
If $H_{\lambda}$ has two distinct real eigenvalues $\mu$ and $\mu^{-1}$, then
$\lambda$ is not an eigenvalue of $T_{mP}$.\newline Case $6a$. If $H_{\lambda
}$ has two complex eigenvalues $e^{\pm i\theta}$ and $\theta$ is an even(odd)
multiple of $\frac{\pi}{m}$, then $\lambda$ is a \textbf{double} eigenvalue of
$T_{mP}$, with its eigenvectors in $S_{mP} $($A_{mP}$).\newline Case $6b$. If
$H_{\lambda}$ has two complex eigenvalues $e^{\pm i\theta}$ and $\theta$ is
not a multiple of $\frac{\pi}{m}$, then $\lambda$ is not an eigenvalue of
$T_{mP}$.
\end{proposition}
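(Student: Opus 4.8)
The plan is to adapt the proof of Proposition \ref{Cases} verbatim for cases $1$ through $5$, since the underlying logic is identical: the eigenvectors of $T_{mP}$ correspond exactly to eigenvectors of $H_\lambda^{2m}$ for eigenvalue $1$, and the geometric membership in $S_{mP}$ versus $A_{mP}$ is dictated by whether $H_\lambda$ (which takes one half-turn to the next) acts with eigenvalue $+1$ or $-1$ on the relevant $\vec\rho\in\mathbb{R}^2$. First I would recall the key bridge established before the proposition: finding $\vec r\in L_{mP}$ with $T_{mP}\vec r=\lambda\vec r$ is the same as finding $\vec\rho\in\mathbb{R}^2$ fixed by $H_\lambda^{2m}$. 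Then in cases $1$ and $2$, where $H_\lambda=\pm I$, we immediately get $H_\lambda^{2m}=I$, so every $\vec\rho$ works, giving a two-dimensional eigenspace; the sign $\pm 1$ determines whether successive half-turns agree or alternate, placing the eigenvectors in $S_{mP}$ or $A_{mP}$ exactly as in the single-turn case.

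Next I would handle cases $3$, $4$, and $5$ by computing $H_\lambda^{2m}$ from the Jordan or diagonal form of $H_\lambda$. In cases $3$ and $4$, where $H_\lambda\approx\left[\begin{smallmatrix}\pm1 & 1\\ 0 & \pm1\end{smallmatrix}\right]$, raising to the $2m$ power keeps a single Jordan block with eigenvalue $1$ (the off-diagonal entry becomes $\pm 2m$ but stays nonzero), so $H_\lambda^{2m}$ has a one-dimensional fixed space, yielding a simple eigenvalue of $T_{mP}$ with the appropriate parity. In case $5$, the eigenvalues $\mu^{2m}$ and $\mu^{-2m}$ are still distinct from $1$ (since $|\mu|\neq 1$), so $H_\lambda^{2m}$ has no fixed vector and $\lambda$ is not an eigenvalue.

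The genuinely new content is case $6$, which splits according to whether the rotation angle is a rational multiple of $\pi/m$. Since $H_\lambda\approx R_\theta$, we have $H_\lambda^{2m}\approx R_{2m\theta}$, a rotation by $2m\theta$. The fixed-vector condition for $R_{2m\theta}$ is precisely that $2m\theta$ be a multiple of $2\pi$, i.e. $\theta$ is a multiple of $\pi/m$. When this holds, $R_{2m\theta}=I$, so $H_\lambda^{2m}=I$ and $\lambda$ is a \emph{double} eigenvalue (case $6a$); when it fails, $R_{2m\theta}$ is a nontrivial rotation with no real eigenvector, so $\lambda$ is not an eigenvalue (case $6b$). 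The remaining delicacy is distinguishing $S_{mP}$ from $A_{mP}$ in case $6a$: here I would track the action of $H_\lambda^{m}$ (one full single-turn), which is $R_{m\theta}$; since $\theta=\ell\pi/m$ gives $m\theta=\ell\pi$, we get $H_\lambda^m\approx R_{\ell\pi}=(-1)^\ell I$, so the eigenvectors satisfy $r_{i+n}=(-1)^\ell r_i$, landing in $S_{mP}$ when $\ell$ is even and $A_{mP}$ when $\ell$ is odd.

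The main obstacle I anticipate is making the parity argument in case $6a$ fully rigorous, because $H_\lambda$ is only \emph{similar} to $R_\theta$ rather than equal to it, so one must confirm that the two-dimensional fixed space of $H_\lambda^{2m}=I$ really does decompose according to the $\pm 1$ eigenspaces of $H_\lambda^m$ in the correct coordinates, and that this geometric decomposition matches the algebraic definitions of $S_{mP}$ and $A_{mP}$ via $r_{i+n}=\pm r_i$. I would resolve this by noting that $H_\lambda^m$ itself equals $(-1)^\ell I$ (not merely similar to it, since a scalar matrix is similarity-invariant), which forces the clean sign relation on the whole sequence $\vec r$ and thus pins down the subspace unambiguously.
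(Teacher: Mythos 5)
Your proposal is correct and follows essentially the same route as the paper: cases $1$--$5$ carry over by raising the Jordan/rotation form of $H_{\lambda}$ to the $2m$-th power, and case $6$ splits according to whether $R_{2m\theta}=I$, i.e.\ whether $\theta$ is a multiple of $\frac{\pi}{m}$ --- in fact your handling of case $6a$, pinning down $S_{mP}$ versus $A_{mP}$ via the scalar identity $H_{\lambda}^{m}=(-1)^{\ell}I$ (the half-turn transform of $mP$, so equality rather than mere similarity), is more explicit than the paper's terse ``we are back to cases $1$ or $2$,'' and your dichotomy matches the statement more faithfully than the paper's own ``$2m\theta$ multiple of $\pi$.'' Two cosmetic slips to fix: since $H_{\lambda}^{m}$ spans $mn$ sides, the sign relation should read $r_{i+mn}=(-1)^{\ell}r_{i}$ (not $r_{i+n}=(-1)^{\ell}r_{i}$, and $H_{\lambda}^{m}$ is not ``one full single-turn''); and in case $6b$, when $2m\theta$ is an odd multiple of $\pi$ the matrix $R_{2m\theta}=-I$ does have real eigenvectors --- the correct (and sufficient) observation is that it has no \emph{fixed} vector, which is what the bridge to $T_{mP}$ actually requires.
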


\begin{proof}
Cases $1$ and $2$ are as before. Cases $3$ and $4$ are also very much the
same, since%
\[
H_{\lambda}\approx\left[
\begin{array}
[c]{cc}%
\pm1 & 1\\
0 & \pm1
\end{array}
\right]  \Rightarrow H_{\lambda}^{2m}\approx\left[
\begin{array}
[c]{cc}%
1 & \pm2m\\
0 & 1
\end{array}
\right]
\]
which clearly has only one eigenvalue $1$. In case $5$ the eigenvalues of
$H_{\lambda}^{2m}$ are $\mu^{2m}$ and $\mu^{-2m}$, and neither is $1$. Now, in
case $6$:%
\[
H_{\lambda}\approx R_{\theta}\Rightarrow H_{\lambda}^{2m}\approx R_{2m\theta}%
\]
and that is why we have to separate it further: if $2m\theta$ is a multiple of
$\pi$, then $H_{\lambda}^{2m}=\pm I$ and we are back to cases $1$ or $2$;
otherwise, $H_{\lambda}^{2m}$ has no eigenvalues.
\qed \end{proof}

Do note that the half-turn transform $H_{\lambda}$ is exactly the same here as
it was in the "one turn" case! So cases $1$-$5$ happen just as often here as
they did before, and with the same values for $\lambda$ -- they account for
$2m$ of the $2mn$ eigenvalues we found! So all the new eigenvalues must come
from case $6a$... Can we figure out their ordering with relation to the "old"
eigenvalues? Indeed we can -- we just need another continuity argument.

\begin{proposition}
Let the specter of $T_{P}$ be as denoted in \ref{SpecL}. The eigenvalue(s)
$\mu$ (and $\mu^{-1}$) of $H_{\lambda}$ depend on $\lambda$ in the following
way:\newline a) If $\lambda_{k}^{2}<\lambda<\lambda_{k+1}^{1}$, then
$H_{\lambda}$ has two complex eigenvalues (as in case $6$). In fact, as
$\lambda$ grows from $\lambda_{k}^{2}$ to $\lambda_{k+1}^{1}$, the eigenvalues
$\mu$ and $\mu^{-1}$ go through all values in the complex unit circle exactly
once.\newline b) If $\lambda_{k}^{1}<\lambda<\lambda_{k}^{2}$ or $\lambda<0$,
then $H_{\lambda}$ has two distinct real eigenvalues (as in case $5$).
\end{proposition}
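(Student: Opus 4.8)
The plan is to reduce everything to the single scalar function $\tau(\lambda)=\operatorname{tr}H_{\lambda}$. Since $\det H_{\lambda}=1$, the eigenvalues of $H_{\lambda}$ are $\mu,\mu^{-1}$ with $\mu+\mu^{-1}=\tau(\lambda)$, so Case $5$ (two distinct reals) occurs exactly when $|\tau(\lambda)|>2$ and Case $6$ (a conjugate pair $e^{\pm i\theta}$ on the unit circle) exactly when $|\tau(\lambda)|<2$, with $2\cos\theta=\tau$. Moreover, from the identity $\det(H_{\lambda}\mp I)=\det H_{\lambda}\mp\operatorname{tr}H_{\lambda}+1=2\mp\tau(\lambda)$ together with the dictionary preceding Proposition \ref{Cases}, the zeros of $\tau-2$ are exactly the eigenvalues of $T_{P}|_{S_{P}}$ and the zeros of $\tau+2$ exactly those of $T_{P}|_{A_{P}}$. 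By the ordering \ref{SpecL} these are the $\lambda_{k}^{1,2}$ with $k$ even (together with $\lambda_{0}$) and with $k$ odd, respectively; so I already know which level $\pm2$ the function $\tau$ attains at each eigenvalue, the sign being dictated by the parity of $k$.

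Next I would verify that $\tau$ is a polynomial of degree exactly $n$. Writing each factor as $S_{k}(\lambda)=A_{k}+\lambda B_{k}$, where $B_{k}$ is the rank-one matrix carrying the single entry $-1/(\alpha_{k}\beta_{k})$ in position $(2,2)$, the only way to reach degree $n$ in the product $H_{\lambda}=S_{n+1}\cdots S_{2}$ ($n$ factors) is to pick $\lambda B_{k}$ from every factor; hence the leading coefficient of $\tau$ equals $\operatorname{tr}(B_{n+1}\cdots B_{2})=(-1)^{n}\prod_{k}(\alpha_{k}\beta_{k})^{-1}\neq0$. Thus $\deg\tau=n$ and $\deg\tau'=n-1$.

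The heart of the argument, and the step I expect to be the main obstacle, is the strict monotonicity claimed in (a); I would obtain it from a counting argument on the zeros of $\tau'$. For each $k=1,\dots,n-1$ the closed interval $[\lambda_{k}^{1},\lambda_{k}^{2}]$ contains a critical point of $\tau$: if $\lambda_{k}^{1}<\lambda_{k}^{2}$ the two endpoints carry the same level $\pm2$ (equal parity), so Rolle gives an interior zero of $\tau'$; if $\lambda_{k}^{1}=\lambda_{k}^{2}=:\lambda^{*}$, this is Case $1$ or $2$, so $H_{\lambda^{*}}=\pm I$ and $H_{\lambda}\mp I=(\lambda-\lambda^{*})H'_{\lambda^{*}}+O((\lambda-\lambda^{*})^{2})$ forces $2\mp\tau=\det(H_{\lambda}\mp I)=O((\lambda-\lambda^{*})^{2})$, i.e. $\lambda^{*}$ is a double zero of $\tau\mp2$ and $\tau'(\lambda^{*})=0$. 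These $n-1$ intervals are pairwise disjoint (separated by the bands, since $\lambda_{k}^{2}<\lambda_{k+1}^{1}$), hence host $n-1$ distinct zeros of $\tau'$; as $\deg\tau'=n-1$ these are all of them, so $\tau$ has no critical point inside any open band $(\lambda_{k}^{2},\lambda_{k+1}^{1})$ nor in $(-\infty,\lambda_{0})$. Therefore $\tau$ is strictly monotonic on each band, running between its endpoint values $+2$ and $-2$ (opposite parities); thus $|\tau|<2$ strictly inside (Case $6$) and $\theta$, defined by $2\cos\theta=\tau$, increases bijectively from $0$ to $\pi$, so $\mu=e^{i\theta}$ and $\mu^{-1}=e^{-i\theta}$ sweep the whole unit circle exactly once, proving (a).

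Finally, part (b) follows by reading off signs from this monotonicity. At each simple-eigenvalue endpoint $\tau$ crosses $\pm2$ transversally (such a point is not a critical point), and on the adjacent band $\tau$ leaves that level heading into $(-2,2)$; consequently the gap $(\lambda_{k}^{1},\lambda_{k}^{2})$, whose two endpoints share the level $+2$ ($k$ even) or $-2$ ($k$ odd) and which contains exactly one critical point, must bulge strictly outside $[-2,2]$, giving $|\tau|>2$ and Case $5$. The same monotonicity, with $\tau(\lambda_{0})=2$ and no critical point below $\lambda_{0}$, forces $\tau>2$ on all of $(-\infty,\lambda_{0})$, hence Case $5$ for every $\lambda<0=\lambda_{0}$. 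Everything beyond the degree-and-Rolle bookkeeping of the third paragraph is a direct transcription of Proposition \ref{Cases} and the ordering \ref{SpecL}; as a sanity check one may also confirm all of this explicitly against the regular-polygon computation, where $\tau(\lambda)$ is known in closed form.
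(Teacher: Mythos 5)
Your proof is correct, and it takes a genuinely different route from the paper's. The paper argues by continuity plus a global count over all periods: as $\lambda$ crosses a band, the eigenvalue $\mu(\lambda)$ must travel the unit circle from $+1$ to $-1$, hitting each rational angle $e^{\pm ij\pi/m}$ at least once; every such hit is an eigenvalue of $T_{mP}$, and since $T_{mP}$ has only $2mn$ eigenvalues, each angle is hit exactly once per band and never inside a gap $(\lambda_k^1,\lambda_k^2)$ nor for $\lambda<0$. You instead run a discrete Hill-discriminant (Floquet-type) argument, reading everything off the single degree-$n$ polynomial $\tau(\lambda)=\operatorname{tr}H_\lambda$: the levels $\tau=\pm2$ are exactly the $S_P$- and $A_P$-eigenvalues, Rolle together with your second-order expansion of $\det(H_\lambda\mp I)$ at double eigenvalues (where $H_{\lambda^*}=\pm I$) places one zero of $\tau'$ in each $[\lambda_k^1,\lambda_k^2]$, and the count $\deg\tau'=n-1$ certifies there are no others, forcing strict monotonicity of $\tau$ across each band and the outward bulge on each gap. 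Your computational steps ($\det H_\lambda=1$, $\det(H_\lambda\mp I)=2\mp\tau$, the leading coefficient of $\tau$ via the rank-one matrices $B_k$, and the interiority of the gap critical points, which is what legitimizes transversality at gap endpoints) all check out. What your approach buys: it is self-contained at the level of $T_P$ -- it needs neither $T_{mP}$ nor the deformation-based ordering of its spectrum, only Eq. (\ref{SpecL}) and Proposition \ref{Cases} -- and it makes the ``exactly once'' claim in (a) airtight via strict monotonicity of $\arccos(\tau/2)$, a point the paper verifies only at rational angles and then leaves to continuity. What the paper's route buys: its bookkeeping simultaneously locates all the new eigenvalues of $T_{mP}$ inside the bands, which is precisely what the following proposition needs; with your discriminant that interlacing also follows at once, the new eigenvalues being the unique band preimages of the levels $\tau=2\cos(j\pi/m)$.
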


\begin{proof}
Just consider the positioning of $\mu$ in the complex plane in each of the $6$
cases we considered, as displayed in Figure \ref{FigComplex}:

\begin{center}
\begin{figure}[h] \centering
\includegraphics[
height=1.5 in,
width=3 in
]%
{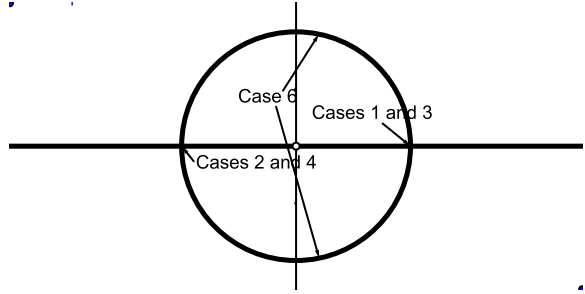}%
\caption{Where is $\mu$ in the complex plane?}
\label{FigComplex}%
\end{figure}
\end{center}

Since $\mu$ varies continuously with $\lambda$ (and $\mu\neq0$), in order to
go from cases $1,3$ to $2,4$ (and vice-versa) we must go through all complex
numbers in the circle $\left\vert \mu\right\vert =1$ (they do come in
conjugate pairs, of course). So each interval of the kind $I_{k}=\left(
\lambda_{k}^{2},\lambda_{k+1}^{1}\right)  $ will contain at least $m-1$ values
of $\lambda$ which correspond to the eigenvalues $e^{\pm i\theta}$ for
$\theta=\frac{k\pi}{m}$ ($k=1,2,...,m-1$) of $H_{\lambda}$, as in case $6a$.
Now, how do we know that each of these complex eigenvalues is visited only
once as $\lambda$ varies in $I_{k}$? Every time $\lambda$ falls into case
$6a$, such $\lambda$ is an eigenvalue of $T_{mP}$. Since there are $n$ such
intervals $I_{k}$, and each interval already contains $m-1$ \textbf{double}
eigenvalues, this already accounts for $2\left(  m-1\right)  n$ eigenvalues.
If we now add the $\lambda_{k}^{1,2}$ themselves (there are $2n$ of them),
which are also eigenvalues of $T_{mP}$, we are already crowded with all $2mn$
eigenvalues $T_{mP}$ could possibly have! So no other values of $\lambda$ can
generate eigenvalues of $H_{\lambda}$ of the form $e^{\pm i\theta}$ where
$\theta$ is \textbf{any} rational multiple of $\pi$. That proves not only that
each complex eigenvalue is visited only once in each interval (a), but also
shows that in (b) no new complex eigenvalues $\mu$ can appear -- so while
$\lambda\in\left(  \lambda_{k}^{1},\lambda_{k}^{2}\right)  $ or $\lambda<0$,
we must keep $\mu$ real and different from $1$.
\qed \end{proof}

We can now gather all information we have in one final proposition:

\begin{proposition}
The $2mn$ eigenvalues of $T_{mP}$ can be ordered the following way%
\[%
\setlength{\tabcolsep}{0pt}
\begin{tabular}
{llllll}
&  &  &  & $\lambda_{0}\left(  =0\right)  $ & $<$\\
$<\lambda_{1/m}^{1}=\lambda_{1/m}^{2}$ & $<\lambda_{2/m}^{1}=\lambda_{2/m}%
^{2}$ & $<...$ & $<\lambda_{1-1/m}^{1}=\lambda_{1-1/m}^{2}<$ & $\lambda
_{1}^{1}=\lambda_{1}^{2}\left(  =1\right)  $ & $<$\\
$<\lambda_{1+1/m}^{1}=\lambda_{1+1/m}^{2}$ & $<\lambda_{1+2/m}^{1}%
=\lambda_{1+2/m}^{2}$ & $<...$ & $<\lambda_{2-1/m}^{1}=\lambda_{2-1/m}^{2}<$ &
$\lambda_{2}^{1}\leq\lambda_{2}^{2}$ & $<$\\
$<\lambda_{2+1/m}^{1}=\lambda_{2+1/m}^{2}$ & $<\lambda_{2+2/m}^{1}%
=\lambda_{2+2/m}^{2}$ & $<...$ & $<\lambda_{3-1/m}^{1}=\lambda_{3-1/m}^{2}<$ &
$\lambda_{3}^{1}\leq\lambda_{3}^{2}$ & $<$\\
$<...$ & $...$ & $...$ & $...$ & $...$ & $<$\\
$<\lambda_{n-1+1/m}^{1}=\lambda_{n-1+1/m}^{2}$ & $<\lambda_{n-1+2/m}%
^{1}=\lambda_{n-1+2/m}^{2}$ & $<...$ & $<\lambda_{n-1/m}^{1}=\lambda
_{n-1/m}^{2}<$ & $\lambda_{n}$ &
\end{tabular}
\]

\end{proposition}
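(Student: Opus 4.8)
The plan is to read the ordering directly off the preceding proposition, which has already done the analytic work: it located every eigenvalue of $T_{mP}$ and described how the eigenvalue $\mu=e^{i\theta}$ of $H_{\lambda}$ moves as $\lambda$ varies. What is left is bookkeeping -- counting the eigenvalues gap by gap and attaching the fractional labels.

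First I would split the $2mn$ eigenvalues of $T_{mP}$ into two families. The $2n$ ``old'' eigenvalues $\lambda_0,\lambda_1^{1,2},\dots,\lambda_{n-1}^{1,2},\lambda_n$ come from Cases $1$-$4$ of the adapted proposition, which are identical to the one-turn case because $H_{\lambda}$ itself does not depend on $m$; these fill the last column of the table. Every remaining eigenvalue comes from Case $6a$, where $H_{\lambda}\approx R_{\theta}$ with $\theta$ a multiple of $\pi/m$, and each such $\lambda$ is a \emph{double} eigenvalue.

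Next I would invoke part (a) of the previous proposition: in each open gap $I_k=(\lambda_k^2,\lambda_{k+1}^1)$, using the conventions $\lambda_0^2:=\lambda_0=0$ and $\lambda_n^1:=\lambda_n$, the angle $\theta$ is a continuous, strictly monotone function of $\lambda$ that sweeps $(0,\pi)$ exactly once. Hence for each $j\in\{1,\dots,m-1\}$ the equation $\theta(\lambda)=j\pi/m$ has a unique solution in $I_k$, giving exactly $m-1$ distinct double eigenvalues there, which monotonicity lets me list increasingly. Part (b) rules out eigenvalues in the inner gaps $(\lambda_k^1,\lambda_k^2)$ and for $\lambda<0$, and the tally $2n+2(m-1)n=2mn$ is already complete, so nothing else occurs (in particular nothing exceeds $\lambda_n$). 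I would then define the labels by position: inside $I_k$ name the new eigenvalues $\lambda_{k+1/m}^1=\lambda_{k+1/m}^2<\dots<\lambda_{k+(m-1)/m}^1=\lambda_{k+(m-1)/m}^2$. Running $k=0,1,\dots,n-1$ reproduces the displayed table line by line.

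The one delicate point I expect is the consistency of this notation across periods -- the claim that $\lambda_{2/5}$ and $\lambda_{6/15}$ are the same eigenvalue. The clean way is to attach the fractional part not to the position but to $\theta$ itself: set $f=\theta/\pi$ when $k$ is even and $f=1-\theta/\pi$ when $k$ is odd, so that $f$ increases with $\lambda$ on $I_k$ and equals $j/m$ exactly at the root $\lambda_{k+j/m}$. Because $H_{\lambda}$ is period-independent, a given $\lambda$ with $\theta/\pi$ rational receives the same index $k+f$ regardless of which $m$ first exposes it, so $j/m=j'/m'$ forces $\lambda_{k+j/m}=\lambda_{k+j'/m'}$. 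This reconciliation of the parity-dependent direction of the monotonicity (increasing $\theta$ for even $k$, decreasing for odd $k$) with the purely positional ordering in the table is the step I would write out most carefully.
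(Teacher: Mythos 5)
Your proposal is correct and follows essentially the same route as the paper: the paper's own proof is exactly the one-line observation that all the analytic work lives in the preceding proposition, and it defines $\lambda_{p}^{1,2}$ as the value of $\lambda$ for which $H_{\lambda}$ has eigenvalues $e^{\pm i\pi p}$ when the integer part of $p$ is even, and $e^{\pm i\pi(1-p)}$ otherwise -- precisely your parity-dependent convention $f=\theta/\pi$ versus $f=1-\theta/\pi$. Your gap-by-gap counting and the cross-period consistency check are worthwhile elaborations (the paper leaves them implicit, having done the counting in the previous proof), but they do not constitute a different argument.
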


\begin{proof}
All the work is already done -- just define $\lambda_{p}^{1,2}$ as the value
of $\lambda$ which makes $H_{\lambda}$ have the eigenvalues $e^{\pm i\pi p}$
when the integer part of $p$ is even (that is, when you are moving from cases
$1,3$ to $2,4$); or $e^{\pm i\pi\left(  1-p\right)  }$ otherwise.
\qed \end{proof}

\subsection{No periods!}

What if we relax the periodicity condition even further: let us \textbf{not}
require the list $\left\{  r_{n}\right\}  _{n\in Z}$ to be periodic. What then?

First of all, clearly the eigenvalues can now be any real number. After all,
just pick any $\lambda\in\mathbb{R}$, any two values $r_{1}$, $r_{2}%
\in\mathbb{R}$ and apply recurrence \ref{Rec} both backwards and forwards to
create the complete list $\left\{  r_{n}\right\}  $. Moreover, if the number
$\lambda$ you picked is any of the eigenvalues of $T_{mP}$ for \emph{some} $m
$, the list will be periodic of period $2mn$, as seen above (and, unless
$\lambda=1$, the cycloid will close). Otherwise, we must have non-periodic
cycloids! As such, one interesting phenomenon (which does not exist in the
Euclidean case) can occur -- a \emph{spiraling cycloid}.

\begin{center}
\begin{figure}[h] \centering
{\includegraphics[
trim=0.176516in 0.000000in 0.176910in 0.000980in,
height=1.2 in,
width=2.4 in
]%
{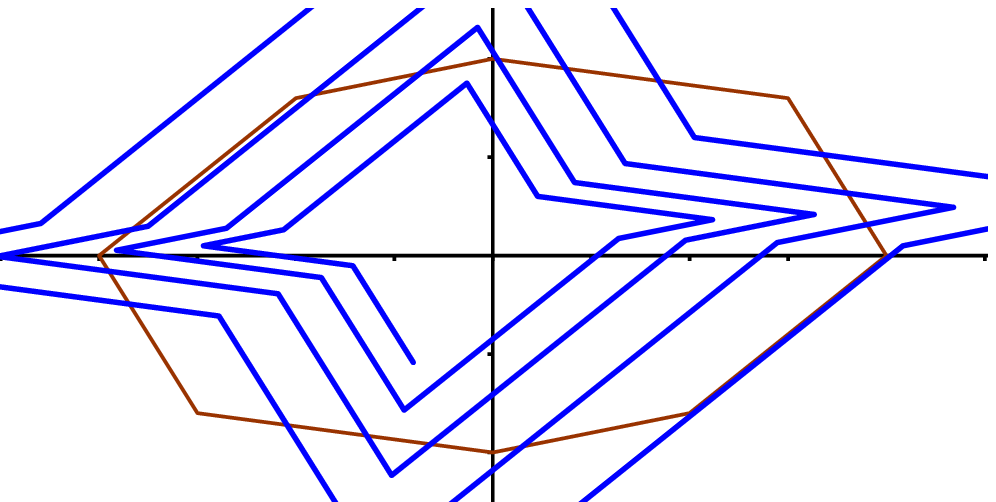}%
\includegraphics[
trim=0.176122in 0.000000in 0.176516in 0.000000in,
height=1.2 in,
width=2.4 in
]%
{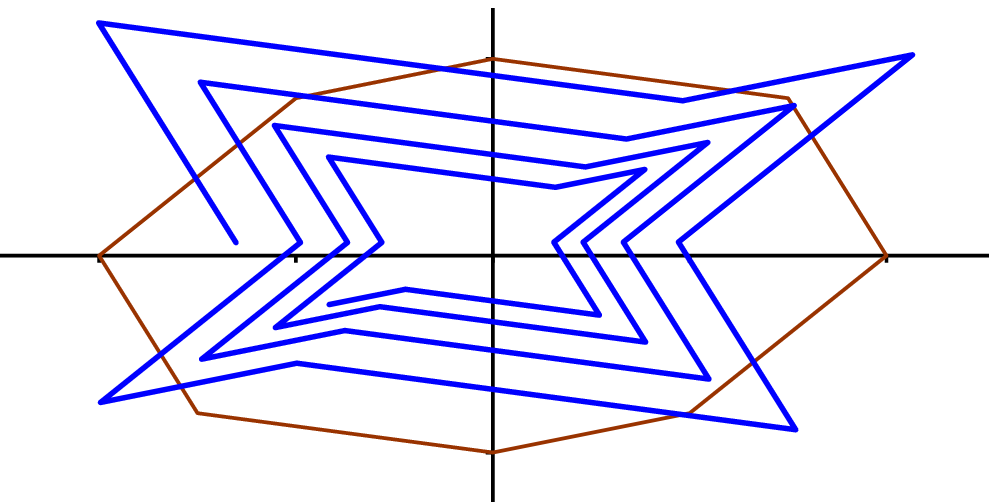}%
}
\caption{Two very different spiraling cycloids
associated to the same $\lambda\approx3.3$}
\end{figure}
\end{center}

\begin{proposition}
Suppose $\lambda_{n}^{1}<\lambda<\lambda_{n}^{2}$ or $\lambda<0$. Then the
cycloids associated to $\lambda$ are unlimited.
\end{proposition}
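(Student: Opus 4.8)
The plan is to reduce the statement to the dynamics of the half-turn transform $H_\lambda$ and to show that the radii $r_i$ must escape to infinity, which then forces the polygon itself to be unbounded. First I would invoke the preceding classification of $\lambda$: for $\lambda<0$, or for $\lambda$ in the indicated gap, part (b) of the previous proposition places us in Case $5$ of Proposition \ref{Cases}, so that $H_\lambda$ has two distinct real eigenvalues $\mu$ and $\mu^{-1}$. Since $\det H_\lambda=1$ forces their product to be $1$ and they are real and distinct, we cannot have $\mu=\pm 1$; thus, after relabeling, $|\mu|>1>|\mu^{-1}|$.

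Next I would use the fact that, by the $n$-periodicity of the sequences $\alpha$ and $\beta$, the transform $H_\lambda$ advances the seed $\vec\rho_0=(r_1,r_2)$ by one half-period at a time, so that $(r_{kn+1},r_{kn+2})=H_\lambda^{\,k}\vec\rho_0$ for every $k\in\mathbb{Z}$ (the negative powers make sense because $\det H_\lambda=1$, hence $H_\lambda$ is invertible; equivalently the recurrence (\ref{Rec}) can be run backwards since every $\alpha_i>0$). Decomposing $\vec\rho_0=c_+v_+ + c_-v_-$ in a basis of eigenvectors of $H_\lambda$, I note that a nonzero cycloid must have $\vec\rho_0\neq\vec 0$ --- otherwise the recurrence would force $\vec r\equiv\vec 0$ --- so at least one of $c_\pm$ is nonzero. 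Then $H_\lambda^{\,k}\vec\rho_0=c_+\mu^{k}v_+ + c_-\mu^{-k}v_-$: if $c_+\neq 0$ its norm grows like $|\mu|^{k}$ as $k\to+\infty$, and if instead $c_+=0$ (so $c_-\neq 0$) its norm grows like $|\mu|^{|k|}$ as $k\to-\infty$. In either direction $\sup_i|r_i|=\infty$.

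Finally I would convert unbounded radii into an unbounded polygon. Since $\Delta_i M=r_i\,\Delta_i P$ and the side vectors $\Delta_i P$ are $2n$-periodic (hence take finitely many nonzero values, each of length at least some $\delta>0$), we obtain $|M_{i+1}-M_i|=|r_i|\,|\Delta_i P|\geq\delta\,|r_i|$, which is unbounded in $i$. If all vertices $\{M_i\}$ lay in a ball of radius $R$, then every edge would satisfy $|M_{i+1}-M_i|\leq 2R$, a contradiction; hence $M$ is unlimited.

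The individual steps are short, and the point I expect to require the most care is the degenerate seed: if $\vec\rho_0$ happens to lie exactly along the contracting eigendirection $v_-$, forward iteration decays rather than grows. This is precisely why the argument must exploit that a $P$-polygon is indexed over all of $\mathbb{Z}$, together with the invertibility of $H_\lambda$, so that geometric growth is recovered by running the recurrence backwards. Everything else is a routine eigenvalue-growth estimate.
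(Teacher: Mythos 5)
Your proposal is correct and follows essentially the same route as the paper's own proof: place $\lambda$ in Case $5$ so that $H_{\lambda}$ has real eigenvalues $\mu,\mu^{-1}$ with $\left\vert \mu\right\vert >1$, decompose the seed $(r_{1},r_{2})$ in the eigenbasis, and obtain geometric growth of the radii either forwards (if the expanding coefficient is nonzero) or backwards (otherwise), exactly as in the paper. Your additional details --- ruling out $\mu=\pm1$, the invertibility of $H_{\lambda}$ for negative iterates, and the final step converting unbounded radii into unbounded vertices via $\left\vert \Delta_{i}M\right\vert \geq\delta\left\vert r_{i}\right\vert $ --- merely make explicit what the paper leaves implicit.
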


\begin{proof}
Just remember that in this case, the eigenvalues associated to $H_{\lambda}$
can be written as $\mu$ and $\mu^{-1}$ where $\left\vert \mu\right\vert >1 $.
Writing $\vec{\rho}=\left(  r_{1},r_{2}\right)  =c_{1}\rho_{1}+c_{2}\rho_{2}$
where $\rho_{1}$ and $\rho_{2}$ are the respective eigenvectors of
$H_{\lambda}$ we have $H_{\lambda}^{m}\vec{\rho}=c_{1}\mu^{m}\rho_{1}+c_{2}%
\mu^{-m}\rho_{2}$. If $c_{1}\neq0$, we have $\left\vert r_{mn+1}\right\vert
\rightarrow\infty$ as $m\rightarrow\infty$; if $c_{1}=0$, then $\left\vert
r_{mn+1}\right\vert \rightarrow\infty$ as $m\rightarrow-\infty$. Either way,
the cycloid is unlimited.
\qed \end{proof}

\section{A Four Vertex Theorem\label{SecFourVertex}}

In differential geometry, a \emph{"vertex"} of a curve is a point where the
curvature reaches a local extremum. We want to show that any closed
$P$-polygon has at least $4$ vertices... of this other kind, which needs to be defined.

\begin{definition}
An \emph{edgex} of a $P$-polygon is a collection of adjacent sides whose
(equal) curvatures correspond to a strict extremum of the sequence of radii in
$\vec{r}$. In other words, an edgex is a zero-crossing of $\Delta\vec{r}$.
\end{definition}

To clarify, if the radii vector is $\left(  ...,1,2,3,3,3,2,2,...\right)  $,
we count that sequence of threes as \textbf{one} edgex, but if the sequence
were $\left(  ....,1,2,3,3,3,4,5,...\right)  $ we see no edgex at all in this
part of the polygon. So we are ready to state our "four edgex theorem", which
adapts the reasoning in \cite{Taba}:

\begin{proposition}
Any closed convex polygon $M$ (which is not homothetic to the $P$-ball) has at
least four edgices. If $M$ has constant $P$-width, it must have at least six edgices.
\end{proposition}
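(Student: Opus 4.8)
The plan is to convert the combinatorial count of edgices into a single sign-change count. By definition an edgex is a zero-crossing of $\Delta\vec{r}$, so writing $\vec{t}=\Delta\vec{r}$ (i.e.\ $t_i=\Delta_i r$), I must show that $\vec{t}$ has at least four sign changes whenever $\vec{r}\in C_P\setminus B_P$. First I would record the two linear identities that $\vec{t}$ satisfies. For any periodic $\vec{r}$ the sum $\sum_{i=1}^{2n}t_i=\sum_i\Delta_i r$ telescopes to $0$; and summation by parts gives $\sum_i t_i P_{i+1}=-\sum_i r_i\Delta_i P$, so the closedness condition (\ref{ClosedC}) is exactly $\sum_i t_i P_{i+1}=\vec{0}$. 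Since $\vec{r}\notin B_P$ we have $\vec{t}\neq\vec{0}$, and $\sum_i t_i=0$ forbids $\vec{t}$ from having constant sign; hence there are at least two sign changes. As sign changes around a cycle come in an even number, it remains only to exclude exactly two.

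Suppose $\vec{t}$ has exactly two zero-crossings. Then $\{i:t_i>0\}$ and $\{i:t_i<0\}$ occupy two disjoint contiguous arcs, so the vertices $\{P_{i+1}:t_i>0\}$ and $\{P_{i+1}:t_i<0\}$ lie on two disjoint arcs of the convex polygon $P$, and can therefore be strictly separated by a line: there is an affine functional $\ell(x)=[X,x]+c$ with $\ell(P_{i+1})>0$ on the first arc and $\ell(P_{i+1})<0$ on the second. Using both identities, $\sum_i t_i\,\ell(P_{i+1})=\bigl[X,\sum_i t_i P_{i+1}\bigr]+c\sum_i t_i=0$; yet each summand $t_i\,\ell(P_{i+1})$ is $\geq 0$ and at least one is strictly positive, a contradiction. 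Therefore $\vec{t}$ has at least four sign changes, which proves the first assertion. I expect this separation to be the crux: the naive choice of a \emph{linear} functional $[X,\cdot]$ through the origin fails, since two complementary arcs cannot both sit inside open half-planes, and the remedy is to allow the genuine affine constant $c$, which is annihilated precisely because $\sum_i t_i=0$.

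For the constant-width case I would use $W_P=D_P\oplus B_P$ to write $\vec{r}=c_0\mathbf{1}+\vec{d}$ with $\vec{d}\in D_P\subseteq A_P$, where $\vec{d}\neq\vec{0}$ because $M$ is not homothetic to $P$. Then $\vec{t}=\Delta\vec{d}$ is anti-periodic, $t_{i+n}=-t_i$, so its nonzero entries, listed cyclically as $\epsilon_1,\dots,\epsilon_{2K}$, satisfy $\epsilon_{j+K}=-\epsilon_j$ under the antipodal pairing. A short parity check then shows that the number of sign changes equals twice the (necessarily odd) number of changes over a half-period, hence is $\equiv 2\pmod 4$. Combined with the lower bound of four from the first part, this forces at least six edgices.

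Finally I note that convexity of $M$ itself is never used---only that $M$ closes (so that $\vec{r}\in C_P$) and that the reference ball $P$ is convex (so its vertices are in convex position, enabling the separating line). The whole argument thus rests on the two linear relations plus one elementary separation fact about convex polygons.
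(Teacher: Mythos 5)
Your proof is correct, but it takes a genuinely different route from the paper's. The paper proves this with its spectral machinery: it decomposes $\vec{r}\in C_P$ into cycloid components, introduces the double involute $I_P$ (the pseudo-inverse of $T_P$), observes that neither evolute transform can decrease the number of edgices, and iterates $I_P$ so that the rescaled iterates approach the lowest nonvanishing cycloid component --- a cycloid with at least $4$ cusps in general, and at least $6$ when $M$ has constant $P$-width (since then the non-constant part of $\vec{r}$ lies in $D_P\subseteq A_P$, killing the $\vec{r}_2$ components). You bypass all of that: your two identities $\sum_i\Delta_i r=0$ (periodicity) and $\sum_i \Delta_i r\, P_{i+1}=\vec{0}$ (closedness, Eq.\ (\ref{ClosedC}), via summation by parts) are the discrete analogue of the curvature derivative being orthogonal to constants and first harmonics, and your strict-separation argument --- with the affine constant $c$ annihilated precisely because $\sum_i\Delta_i r=0$ --- is the classical four-vertex argument in discrete form. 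Your constant-width case is also different in mechanism: anti-periodicity $\Delta_{i+n}r=-\Delta_i r$ makes the zero-crossings come in antipodal pairs with an odd count per half-period, hence a total $\equiv 2\pmod 4$, which together with the bound $\geq 4$ forces $\geq 6$. As to what each approach buys: yours is elementary and self-contained, requiring neither the eigenvalue ordering (\ref{SpecL}) (whose proof in the paper rests on a deformation/continuity argument) nor the cusp counts of cycloids, and, as you note, it uses only that $M$ closes (true of the paper's proof as well, despite the statement's hypothesis); the paper's proof, in exchange, gives a hierarchy --- the edgex count is bounded below by the cusp count of the lowest surviving cycloid component, so the conclusion strengthens automatically whenever more low components vanish --- and it exhibits the cycloid decomposition as a working tool. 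Two facts you assert without proof are true but deserve a line each in a final write-up: that two disjoint arcs of vertices of a strictly convex polygon can be strictly separated (cut $P$ with a line through interior points of the two ``gap'' edges; it meets $\partial P$ only there, and the non-degeneracy $[\nabla_i P,\Delta_i P]>0$ from Eq.\ (\ref{SignalQ}) guarantees strictness), and that the antipodal involution on the cyclic sequence of nonzero entries of $\Delta\vec{r}$ must be the half-turn $j\mapsto j+K$, being an order-preserving fixed-point-free involution. Neither is a gap.
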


\begin{proof}
Take the radii vector $\vec{r}\in C_{P}$ associated to $M$, and decompose it
as%
\[
\vec{r}=\vec{r}_{0}+\vec{r}_{2}^{1}+\vec{r}_{2}^{2}+\vec{r}_{3}^{1}+\vec
{r}_{3}^{2}+...+\vec{r}_{n-1}^{1}+\vec{r}_{n-1}^{2}+\vec{r}_{n}%
\]
where each $\vec{r}_{i}^{j}$ is a cycloid associated to the eigenvalue
$\lambda_{i}^{j}$ (note that $i\neq1;$ suppose for now that $\vec{r}_{2}%
^{1}\neq\vec{0}$). Since $\vec{r}_{0}$ is a multiple of $\mathbf{1}$, it does
not alter its number of local extrema, so we may discard it completely. We can
define a \emph{double} \emph{involute of }$M$ as the $P$-polygon with radii
vector given by%
\[
I_{P}\vec{r}=\frac{\vec{r}_{2}^{1}}{\lambda_{2}^{1}}+\frac{\vec{r}_{2}^{2}%
}{\lambda_{2}^{2}}+...+\frac{\vec{r}_{n}}{\lambda_{n}}%
\]
Note that\emph{\ }$T_{P}I_{P}\vec{r}=\vec{r}$ ($I_{P}$ is the pseudo-inverse
of $T_{P}$). Now the key to the proof is to realize that every application of
either evolute transform $E_{P}$ or $E_{Q}$ cannot decrease the number of
edgices! This should be clear from Eq. \ref{Evolute2} -- since $s_{i}%
=\alpha_{i}\Delta_{i}r$, each edgex of $\vec{r}$ must correspond to a zero
crossing of $\vec{s}$, and between two consecutive zero crossings of $\vec{s}
$ we must have an edgex of $\vec{s}$! In other words, since $\vec{r}$ is the
double evolute of $I_{P}\vec{r}$, we conclude that $\vec{r}$ must have at
least as many edgices as $I_{P}\vec{r}$. Now iterate $I_{P}$! So $\vec{r}$ has
at least as many edgices as%
\[
I_{P}^{k}\vec{r}=\frac{\vec{r}_{2}^{1}}{\left(  \lambda_{2}^{1}\right)  ^{k}%
}+\frac{\vec{r}_{2}^{2}}{\left(  \lambda_{2}^{2}\right)  ^{k}}+...+\frac
{\vec{r}_{n}}{\left(  \lambda_{n}\right)  ^{k}}=\frac{1}{\left(  \lambda
_{2}^{1}\right)  ^{k}}\left(  \vec{r}_{2}^{1}+\left(  \frac{\lambda_{2}^{1}%
}{\lambda_{2}^{2}}\right)  ^{k}\vec{r}_{2}^{2}+...+\left(  \frac{\lambda
_{2}^{1}}{\lambda_{n}}\right)  ^{k}\vec{r}_{n}\right)
\]
We might as well ignore the homothety of a factor of $\left(  \lambda_{2}%
^{1}\right)  ^{k}$, and note that eventually this involute will be arbitrarily
close to $\vec{r}_{2}^{1}$ -- which is a cycloid with $4$ cusps, and therefore
$4$ edgices (if $\lambda_{2}^{1}=\lambda_{2}^{2}$, just group together
$\vec{r}_{2}^{1}$ and $\vec{r}_{2}^{2}$ to form a single cycloid with $4$
cusps and repeat the argument). If it so happens that $\vec{r}_{2}^{1}=\vec
{r}_{2}^{2}=\vec{0}$, just repeat the argument using the first non-zero
cycloid instead of $\vec{r}_{2}^{1}$, and the number of cusps will be even
bigger. For example, if the initial curve has constant $P$-width, then it must
live in $W_{P}=B_{P}\oplus D_{P}$; since we are ignoring the component $\vec{r}_{0}$,
we have a vector in $D_{P}\subseteq A_{P}$, so the $\vec{r}_{2}$
components \textbf{must} be zero and the decomposition starts with $\vec
{r}_{j}^{i}$ where $j\geq3$ -- a cycloid with $6$ cusps or more, and therefore
$6$ edgices or more.
\qed \end{proof}


\begin{thebibliography}{9}                                                                                                %

\bibitem {Arnold}M. Arnold, I. Izmestiev,  D. Fuchs,  S. Tabachnikov \&  E. Tsukerman,
Iterating evolutes and involutes, http://arxiv.org/abs/1510.07742.

\bibitem {Cycloids}M. Craizer, R. Teixeira \& V. Balestro, Closed cycloids in
a normed plane (2016), http://arxiv.org/abs/1608.01651.

\bibitem {Evolutes}M. Craizer \& H. Martini, Involutes of polygons of constant
width in Minkowski planes, \emph{Ars Mathematica Contemporanea} \textbf{11}
(2016), 107--125.

\bibitem {Kac1966}M. Kac, Can one hear the shape of a drum?, \emph{Amer. Math.
Monthly} \textbf{73} (1966), 1--23.

\bibitem {Schneider}R. Schneider, The middle hedgehog of a planar convex body,
\emph{Beitr\"{a}ge Algebra Geom.} (2016), 1--11.

\bibitem {Taba}S. Tabachnikov, A four vertex theorem for polygons, \emph{Amer.
Math. Monthly} \textbf{107} (2000), 830--833.

\bibitem {Thompson}A. C. Thompson, \emph{Minkowski Geometry}, Encyclopedia of
Mathematics and its Applications, Vol. 63, Cambridge Univ. Press, Cambridge, 1996.

\bibitem {WangShi}Y. Wang \& Y. Shi, Eigenvalues of second-order difference
equations with periodic and antiperiodic boundary conditions, \emph{J. Math.
Anal. Appl.} \textbf{309} (2005), 56--69.
\end{thebibliography}
\end{document}